\font\bfit = cmbxti10
\newenvironment{proof} {\noindent {\em \textbf{Proof}} } { \hfill \fbox{~} \\ }
\def\1{1\kern-.20em {\rm l}}
\def\abs  #1\par{\vskip2truemm   {\noindent} {\parindent=12truemm\narrower\baselineskip=3truemm \pf #1\par}}
\newtheorem{theorem}{Theorem}[section]
\newtheorem{corollary}[theorem]{Corollary}
\newtheorem{definition}[theorem]{Definition}
\newtheorem{lemma}[theorem]{Lemma}
\newtheorem{proposition}[theorem]{Proposition}
\numberwithin{equation}{section}
\newcommand{\R}{\mathbb{R}}
\newcommand{\N}{\mathbb{N}}
\newcommand{\C}{\mathbb{C}}
\def\1{1\kern-.20em {\rm l}}
\def\qed{\ifmmode\mbox{\hfill\sqb}\else{\ifhmode\unskip\fi%
\nobreak\hfil
\penalty50\hskip1em\null\nobreak\hfil\sqb
\parfillskip=0pt\finalhyphendemerits=0\endgraf}\fi}
\def\keywords#1{\par
	\vspace*{8pt}
	{{\leftskip18pt\rightskip\leftskip
	\noindent{\it Keywords}\/:\ #1\par}}\par}
\def\cqfd{\ifmmode\sqw\else{\ifhmode\unskip\fi\nobreak\hfil
\penalty50\hskip1em\null\nobreak\hfil\sqw
\parfillskip=0pt\finalhyphendemerits=0\endgraf}\fi}
\title{\bf On the $D_{\omega}$-classical orthogonal polynomials}
\author{\small  Khalfa DOUAK \\
\small Laboratoire Jacques-Louis Lions, Sorbonne Université,\\
4 place Jussieu 75252 Paris Cedex 05, France\\
\small  khalfa.douak@gmail.com\\
\\
{\bfit Dedicated to Pascal MARONI for his 90th birthday}
}
\date{\today}
\begin{document}
\maketitle
\begin{abstract}
\noindent
 We wish to investigate the $D_{\omega}$-classical orthogonal polynomials, where $D_{\omega}$ is a special case of the Hahn operator. For this purpose,
 we consider the problem of finding all sequences of orthogonal polynomials such that their $D_{\omega}$-derivatives are also orthogonal polynomials.
 To solve this problem we adopt a different approach to those employed in this topic. We first begin by determining the coefficients involved in their recurrence relations, and then providing an exhaustive list of all solutions. When  $\omega=0$, we rediscover the classical orthogonal polynomials of Hermite, Laguerre, Bessel and Jacobi. For $\omega=1$, we encounter the families of discrete classical orthogonal polynomials as particular cases.
\end{abstract}

\keywords{ Classical orthogonal polynomials; discrete orthogonal polynomials; recurrence relations; difference operator; difference equations.}

 {\bf AMS Classification.}  33C45; 42C05.

\section{Introduction and preliminary results}
The orthogonal polynomials are characterized by the fact that they satisfy a second-order recurrence relation. They said to be classical if their
derivatives also form a sequence of orthogonal polynomials \cite{Hahn1}. Hahn generalized the classical orthogonal polynomials by generalizing their
characteristic properties (see \cite{AlSa,Chih} for more details). For this, he considered the linear operator \cite{Hahn2}
 \begin{equation}
\left(H_{q,\omega}f\right)(x):=\frac{f(qx+\omega)-f(x)}{(q-1)x+\omega},\label{eq1.1}
\end{equation}
for all polynomial $f$, with $q$ and $\omega$ are two fixed complex numbers.\\
Hahn showed that there is no loss of generality in assuming $\omega$ to be zero so that in what follows $q$ may be thought of as $1$ or different to $1$.
For $q\ne1$ and $\omega=0$, we obtain the $q$-difference operator (also known as the Jackson’s $q$-operator) which we write 
$\left({\mathscr D}_{q}f\right)(x):=\left(H_{q,0}f\right)(x)$.
When $q=1$ with $\omega\ne0$ we get the discrete operator $\left(D_{\omega}f\right)(x):=\left(H_{1,\omega}f\right)(x)$, that is,
\begin{equation}
\left(D_{\omega}f\right)(x)=\frac{f(x+\omega)-f(x)}{\omega}.\label{eq1.2}
\end{equation}
 For $\omega=1$, we meet the finite (or forward) difference operator $\Delta f(x)= f(x+1)-f(x)$.\par
 \smallskip
\noindent The limiting case $\omega\to0$ (resp. $q\to1$) of $D_{\omega}$ (resp. ${\mathscr D}_{q}$) gives rise to the derivative operator $D=d/dx$,
 giving  $(Df)(x):=f'(x)$. Because it is always possible to take such  a limit, this point is not really important at this time.
It will be dealt with a little bit later  when necessary.\par
\medskip
\noindent Motivated by the several properties common to all of the classical orthogonal polynomials, Hahn \cite{Hahn2} posed and solved five
(equivalent) problems that  are related to the operator ${\mathscr D}_{q}$ and obtained that all possible solutions lead to the same orthogonal
 polynomial sequences (OPS) which are the so-called classical $q$-orthogonal polynomials. Later on, the study of such polynomials has known an
 increasing interest (see for instance the \cite{KoLeSw} and the references therein).\\
The first problem studied by Hahn is the following:\par
\smallskip
\noindent Find all OPS $\{P_n\}_{n\geqslant0}$ such that $\{{\mathscr D}_{q}P_n\}_{n\geqslant0}$ is also an OPS.\par
\smallskip
\noindent For more details about the solutions of these problems we refer the reader to \cite{Hahn1,AlSa,Chih,Isma}.\\
In \cite{DoMa1} Douak and Maroni considered the problem of finding  all OPS such that their $D$-derivatives are also OPS. Instead of basing the study 
of this problem on the various properties of  orthogonal polynomials, the authors have rather founded their exposures in a purely algebraic point of view, focusing primarily on the explicit calculation of recurrence coefficients. The identified polynomials are none other than the classical orthogonal polynomials of Hermite, Laguerre, Jacobi and Bessel with the usual restrictions on the parameters.\par
\smallskip
\noindent Referring back to the operator $D_\omega$, we will pose the analogous problem: \par
 \smallskip
\noindent $\mathbf{(P)}$ {\it Find all OPS $\{P_n\}_{n\geqslant0}$ such that $\{D_{\omega}P_n\}_{n\geqslant0}$ are also OPS}.\par
\medskip
\noindent Note that in this field a  general method of studying the classical orthogonal polynomials of a discrete variable as solutions of 
a second-order difference equation of hypergeometric type was considered by Nikiforov {\it et al.} \cite{NiSuUv}. This approach was also adopted
 by Lesky in \cite{Lesk1,Lesk2}. \par
\medskip
\noindent This work is mainly intended to constructing the $D_{\omega}$-classical orthogonal polynomials by proceeding as in \cite{DoMa1}.
Such approach is rather new and, of course, different  to those previously used in several studies dedicated to this topic (see for instance
\cite{Hahn1,NiSuUv,GaMaSa,AbMa} and the references therein). After determining the recurrence coefficients, we proceed to the identification of 
the resulting polynomials. Under some restrictions on the parameters, we establish that these polynomials can be reduced to one of the well-known
 families of discrete classical orthogonal polynomials.\\
The same method was also used within the $d$-orthogonality context ($d\geqslant1$) to provide many extensions of the classical orthogonal polynomials
(see, e.g. \cite{DoMa1,DoMa2,Douak,LoVa} and the references  therein).\par
\medskip
\noindent In an earlier survey, Abdelkarim and Maroni \cite{AbMa} investigated the problem $\mathbf{(P)}$ according to a functional approach.
The authors had established various equivalent properties characterizing the resulting polynomials.  Particularly, they showed  that those polynomials satisfy the so-called functional Rodrigues's formula \eqref{eq1.14}. Based on this last characterization, up to linear transformation of the variable, 
they found that there are four classes of  $D_\omega$-classical orthogonal sequences satisfying Rodrigues's formula, including the Charlier, Meixner, Krawchuk and Hahn polynomials as  special cases of them.\par
 \medskip
\noindent Let $\mathscr P$ be the vector space of polynomials of one variable with complex coefficients and let $\mathscr P'$ be its algebraic dual.
 We denote by $\bigl<.\,,\, .\bigr>$ the duality brackets between $\mathscr P'$ and  $\mathscr P$. Let us denote by $\{P_n\}_{n\geqslant0}$ a polynomials
 sequence (PS), $\deg P_n=n$, and $\{u_n\}_{n\geqslant0}$ its associated dual sequence (basis) defined by $\bigl< u_n , P_m \bigr> = \delta_{n m} ;\, 
 n , m\geqslant0,$  where $\delta_{n m}$ is the Kronecker's delta symbol. The first element $u_0$ of the dual sequence is said to be the {\it canonical} 
 form associated to the PS $\{P_n\}_{n\geqslant0}$. Throughout this article, we will always consider the sequence of {\it monic} polynomials, i.e. the leading coefficient of each polynomial $P_n$ is one ($P_n(x)=x^n+\cdots$).\par
\noindent Given a form $u\in \mathscr P'$. The sequence of complex numbers $(u)_n,\ n=0, 1, 2, \ldots,$ defined by $(u)_n := \big< u , x^n \big>$
denotes the moments of $u$ with respect to the sequence $\{x^n\}_{n \geqslant0}$.
 The form $u$ is called {\it regular} (or {\it quasi-definite}) if we can associate with it a PS $\{P_n\}_{n\geqslant0}$ such that
\begin{align*}
\left<u,P_nP_m\right>=k_n\delta_{n,m},\, n, m\geqslant0\ ; \ k_n\ne0,\ n\geqslant0.
\end{align*}
In this case $\{P_n\}_{n\geqslant0}$ is an orthogonal polynomials sequence (OPS) with respect to (w.r.t.) $u$. As an immediate consequence of the regularity of $u$, we have  $(u)_0 \ne0$  and $u=\lambda u_0$ with $\lambda\ne0$. Furthermore, the elements of the dual sequence $\{u_n\}_{n\geqslant0}$ are such that
\begin{align}
u_n=\left(\big<u_0,P^2_n\big>\right)^{-1}P_nu_0,\ \ n=0, 1, 2, \ldots.\label{eq1.3}
\end{align}
  So, in all what follows, we  consider the orthogonality of any PS w.r.t. its canonical form $u_0$.\\
 First, let us  introduce the two operators $h_a$ and   $\tau_b$ defined   for all $f\in \mathscr P$ by
  \begin{align}
 (h_a f)(x)= f(ax) \ \ \mbox{and}\ \ (\tau_b f)(x)= f(x-b),\quad a\in \C^{\ast}:=\C\backslash\{0\}, \ b\in\C.\label{eq1.4}
   \end{align}
   On the other hand, for any functional $u$, we can write by transposition
\begin{align}
\hskip1cm &\left<\tau_{-b}u,f(x)\right>:=\left<u,\tau_{b}f(x)\right>=\left<u,f(x-b)\right>,&& f\in {\mathscr P},\label{eq1.5}\\
&\left<h_{a}u,f(x)\right>:=\left<u,h_{a}f(x)\right>=\left<u,f(ax)\right>,&& f\in {\mathscr P}.\label{eq1.6}\hskip1cm
\end{align}
 For further formulas and other properties fulfilled by the operator $D_\omega$  see \cite{AbMa}.\\
We now consider the sequence of monic polynomials $\{Q_n(x):=(n+1)^{-1}D_\omega P_{n+1}(x)\}_{n\geqslant0}$, with its associated dual sequence denoted by $\{v_n\}_{n\geqslant0}$ and fulfilling
\begin{align}
D_{-\omega}\left(v_n\right)=-(n+1)u_{n+1},\ n\geqslant0,\label{eq1.7}
\end{align}
where by definition
\begin{align}
\big< D_{-\omega}u\, ,\, f\big>=-\big< u\, ,\, D_\omega f\big> ,\, u\in \mathscr P' ,\, f\in \mathscr P.\label{eq1.8}
\end{align}
Next, in the light of the so-called Hahn property \cite{Hahn1}, we give the following definition.
\begin{definition}
The {OPS} $\{P_n\}_{n\geqslant0}$ is called ``$D_\omega$-classical” if the sequence of its derivatives $\{Q_n\}_{n\geqslant 0}$ is also a {OPS}.
\end{definition}
Thus,  $\{P_n\}_{n\geqslant0}$ is orthogonal w.r.t. $u_0$ and satisfies the second-order recurrence relation
\begin{subequations}
\begin{align}
&P_{n+2}(x) = (x - \beta_{n+1})P_{n+1}(x)-\gamma_{n+1}P_n (x), \ n\geqslant0,\label{1.9a}\\
&P_1(x) = x - \beta_0,\ P_0(x) = 1,\label{1.9b}
\end{align}
\end{subequations}
and $\{Q_n\}_{n\geqslant0}$ is orthogonal w.r.t. $v_0$  and satisfies the second-order recurrence relation
\begin{subequations}
\begin{align}
&Q_{n+2}(x) = (x - \tilde\beta_{n+1})Q_{n+1}(x)  - \tilde\gamma_{n+1} Q_n(x), \ n\geqslant0,\label{1.10a}\\
& Q_1(x) = x - \tilde\beta_0 ,\ Q_0(x) = 1, \label{1.10b}
\end{align}
\end{subequations}
with the regularity conditions $\gamma_{n} \ne0$ and $\tilde\gamma_{n} \ne0$ for every $n\geqslant1$.\\
Finally, we will summarise in the following proposition the important properties characterizing the $D_\omega$-classical orthogonal polynomials as 
stated in \cite[Propositions 2.1-2.3]{AbMa}.
\begin{proposition}
For any {\rm OPS} $\{P_n\}_{n\geqslant0}$, the following are equivalent statements:\\
{\rm(a)} The sequence $\{P_n\}_{n\geqslant0}$ is $D_\omega$-classical.\\
{\rm(b)} The sequence $\{Q_n\}_{n\geqslant0}$ is orthogonal.\\
{\rm(c)} There exist two polynomials $\Phi$ monic $(with\, \deg \Phi= t\leqslant2)$
 and $ \Psi\ (with\, \deg \Psi=1)$, and a sequence $\{\lambda_n\}_{n\geqslant0}$, $\lambda_n\ne0$ for all $n$, such that
  \begin{align}
   \Phi(x)\left(D_\omega D_{-\omega}P_{n+1}\right)(x)-\Psi(x)\left(D_{-\omega}P_{n+1}\right)(x)+\lambda_nP_{n+1}(x)=0,\ n\geqslant0.\label{eq1.11}
   \end{align}
{\rm(d)} The sequences $\{Q_n\}_{n\geqslant0}$ and $\{P_n\}_{n\geqslant0}$ are interlinked via the differentiation formula
 \begin{align}
  \Phi(x)Q_n(x)=\alpha_{n+2}^2P_{n+2}(x)+\alpha_{n+1}^1P_{n+1}(x)+\alpha_{n}^0P_{n}(x),\ n\geqslant0,\quad(\alpha_{n}^0\ne0).\label{eq1.12}
 \end{align}
 This identity is referred to as the {\it first structure relation} of the OPS $\{P_n\}_{n\geqslant0}$.\\
{\rm(e)}The form $u_0$ is $D_\omega$-classical,  say, it is regular and satisfies the functional equation
  \begin{align}
  D_{-\omega}\left(\Phi u_0\right)+\Psi u_0=0.\label{eq1.13}
   \end{align}
   {\rm(f)} There exist a monic polynomial $\Phi$,  $ \deg \Phi\leqslant2$, and a sequence $\{\lambda_n\}_{n\geqslant0}$, $\lambda_n\ne0$ for all $n$,
   such that the canonical form $u_0$ satisfies the so-called functional Rodrigues formula
     \begin{align}
P_n u_0 = \lambda_n D^n_{-\omega}\Big\{\Big(\prod_{\nu=0}^{n-1}\tau_{-\nu \omega}\Phi \Big)u_0\Big\} ,\ n\geqslant0,\quad
 \mbox{ with } \prod_{\nu=0}^{-1}\!:=1.\label{eq1.14}
   \end{align}
\end{proposition}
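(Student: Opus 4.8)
The plan is to read (a)$\Leftrightarrow$(b) off the very definition of a $D_\omega$-classical sequence and then to run the cycle (b)$\Rightarrow$(e)$\Rightarrow$(c)$\Rightarrow$(d)$\Rightarrow$(b), together with the separate equivalence (e)$\Leftrightarrow$(f); this makes all six statements equivalent. Throughout I abbreviate $c_n:=\langle u_0,P_n^2\rangle\ (\ne0)$ and $\tilde c_n:=\langle v_0,Q_n^2\rangle$, and I use freely: the dual-sequence identity \eqref{eq1.7}, the representation \eqref{eq1.3} together with its analogue $v_n=\tilde c_n^{-1}Q_nv_0$ (valid once $\{Q_n\}$ is known to be orthogonal), the transposition rule \eqref{eq1.8}, and the discrete Leibniz rule $D_\omega(fg)=(D_\omega f)(\tau_{-\omega}g)+f(D_\omega g)$ for the shift $\tau$ of \eqref{eq1.4}.

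The heart of the matter is (b)$\Rightarrow$(e). Assuming $\{Q_n\}$ orthogonal, $v_0$ is regular, so I would expand it in the dual basis, $v_0=\big(\sum_{k\ge0}c_k^{-1}\langle v_0,P_k\rangle P_k\big)u_0$, and prove that only the first three coefficients survive, i.e. $\langle v_0,P_k\rangle=0$ for $k\ge3$; this produces a monic $\Phi$ with $\deg\Phi\le2$ and $v_0=\Phi u_0$. Granting this, \eqref{eq1.7} at $n=0$ combined with \eqref{eq1.3} gives $D_{-\omega}(v_0)=-u_1=-c_1^{-1}P_1u_0$, hence $D_{-\omega}(\Phi u_0)+\Psi u_0=0$ with $\Psi$ proportional to the degree-one polynomial $P_1$; since $u_0$ is regular, statement (e) follows. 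This degree bound $\deg\Phi\le2$ is the point I expect to be the main obstacle: writing $\langle v_0,P_k\rangle=\xi_{k,0}(v_0)_0$, where $\xi_{k,0}$ is the $Q_0$-coefficient in the expansion $P_k=\sum_j\xi_{k,j}Q_j$, one must show $\xi_{k,0}=0$ for $k\ge3$. I would force this by playing the two three-term recurrences \eqref{1.9a} and \eqref{1.10a} against the identity $D_\omega P_k=kQ_{k-1}$ and comparing leading coefficients, the decisive feature being that $D_\omega$ drops the degree by exactly one, so the overlap between the two orthogonal families can spread over at most three consecutive indices.

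To close the cycle I would argue as follows. For (e)$\Rightarrow$(c), introduce $\mathcal L[f]:=\Phi(D_\omega D_{-\omega}f)-\Psi(D_{-\omega}f)$; a degree count shows $\mathcal L$ sends every polynomial of degree $m$ to one of degree at most $m$, with a nonzero leading scalar computed from the top coefficients of $\Phi$ and $\Psi$, while the Pearson equation \eqref{eq1.13} makes $\mathcal L$ symmetric for $u_0$, namely $\langle u_0,\mathcal L[f]g\rangle=\langle u_0,f\mathcal L[g]\rangle$ (this is the summation-by-parts obtained from \eqref{eq1.8} and the Leibniz rule). Consequently $\langle u_0,\mathcal L[P_{n+1}]P_m\rangle=\langle u_0,P_{n+1}\mathcal L[P_m]\rangle=0$ for all $m\le n$, so $\mathcal L[P_{n+1}]$ is a scalar multiple of $P_{n+1}$, which is exactly \eqref{eq1.11}, with $\lambda_n\ne0$ by the leading-coefficient computation. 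For (c)$\Rightarrow$(d) I would substitute the polynomials $D_{-\omega}P_{n+1}$ and $D_\omega D_{-\omega}P_{n+1}$ into \eqref{eq1.11}, pass from $D_{-\omega}P_{n+1}$ to $Q_n$ via the shift relation $D_{-\omega}=\tau_\omega D_\omega$, and expand $\Phi Q_n$ in the basis $\{P_k\}$; orthogonality of $\{P_n\}$ truncates the expansion to the three consecutive terms of \eqref{eq1.12}, with $\alpha_n^0\ne0$ inherited from $\lambda_n\ne0$. Finally (d)$\Rightarrow$(b) is the clean step: setting $v_0:=\Phi u_0$ and using \eqref{eq1.12} with the orthogonality of $\{P_n\}$ yields $\langle v_0,Q_nQ_m\rangle=\langle u_0,(\Phi Q_n)Q_m\rangle=\alpha_n^0 c_n\delta_{n,m}$, nonzero on the diagonal precisely because $\alpha_n^0\ne0$; hence $\{Q_n\}$ is orthogonal.

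It remains to record (e)$\Leftrightarrow$(f). The implication (f)$\Rightarrow$(e) is immediate: at $n=1$ the product in \eqref{eq1.14} reduces to $\Phi$, so $P_1u_0=\lambda_1D_{-\omega}(\Phi u_0)$, whence $D_{-\omega}(\Phi u_0)+\Psi u_0=0$ with $\Psi=-\lambda_1^{-1}P_1$ of degree one. Conversely, (e)$\Rightarrow$(f) I would prove by induction on $n$, feeding \eqref{eq1.13} into $u_{n+1}=-(n+1)^{-1}D_{-\omega}(v_n)$ (from \eqref{eq1.7}) and iterating; the shift factors $\tau_{-\nu\omega}$ in $\prod_{\nu=0}^{n-1}\tau_{-\nu\omega}\Phi$ appear exactly because $D_{-\omega}$ fails to commute with multiplication by $\Phi$, the Leibniz rule contributing one shift at each step. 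The only genuinely delicate ingredient in the whole scheme is the degree bound $\deg\Phi\le2$ launching (b)$\Rightarrow$(e); everything after it is bookkeeping with \eqref{eq1.7}, \eqref{eq1.3} and the two recurrence relations.
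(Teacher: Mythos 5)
First, a point of reference: the paper itself contains no proof of this proposition. It is imported verbatim from Abdelkarim--Maroni (``as stated in \cite[Propositions 2.1-2.3]{AbMa}''), so there is nothing internal to compare your argument against; the closest material is the paper's own proof of Proposition 1.4, whose converse direction is exactly your (b)$\Rightarrow$(e) leg: show $\big<v_0,P_k\big>=0$ for $k\geqslant3$, conclude via Lemma 1.3 and \eqref{eq1.3} that $v_0=\Phi u_0$ with $\deg\Phi\leqslant2$, then read $\Psi$ off \eqref{eq1.7} at $n=0$. Your global architecture --- (a)$\Leftrightarrow$(b) by definition, the cycle (b)$\Rightarrow$(e)$\Rightarrow$(c)$\Rightarrow$(d)$\Rightarrow$(b), plus (e)$\Leftrightarrow$(f) --- is the standard one and is viable; in particular, your asserted summation-by-parts symmetry is true: from \eqref{eq1.13}, \eqref{eq1.8} and the discrete Leibniz rule one gets, for all polynomials $f,g$,
\begin{align*}
\big<u_0,\big(\Phi D_\omega D_{-\omega}f-\Psi D_{-\omega}f\big)g\big>=-\big<u_0,\Phi(D_\omega f)(D_\omega g)\big>,
\end{align*}
using $g-\tau_{-\omega}g=-\omega D_\omega g$ and $\omega D_\omega D_{-\omega}f+D_{-\omega}f=D_\omega f$, and this expression is symmetric in $f,g$.

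Two steps, however, fail as written. (i) In (e)$\Rightarrow$(c) you justify $\lambda_n\ne0$ ``by the leading-coefficient computation''. That computation gives $\lambda_n=(n+1)(\psi_1-na_2)$, where $a_2,\psi_1$ are the leading coefficients of $\Phi,\Psi$; when $\deg\Phi=2$ this vanishes whenever $\psi_1/a_2$ happens to be a nonnegative integer, and nothing about leading coefficients forbids that. What forbids it is regularity: if $\lambda_N=0$ for some $N$, then $\Phi D_\omega D_{-\omega}P_{N+1}-\Psi D_{-\omega}P_{N+1}$ has degree $\leqslant N$ and is $u_0$-orthogonal to $P_0,\dots,P_N$ by the symmetry, hence vanishes identically; the displayed identity with $f=P_{N+1}$ and $g$ arbitrary (note $D_\omega g$ sweeps out all of $\mathscr P$) then forces $\Phi Q_N u_0=0$, which is impossible because a regular form admits no nonzero polynomial annihilator. (ii) In (c)$\Rightarrow$(d) the claim that ``orthogonality of $\{P_n\}$ truncates the expansion'' of $\Phi Q_n$ is circular: the coefficient of $P_k$ in that expansion is $\big<u_0,P_k^2\big>^{-1}\big<u_0,\Phi Q_nP_k\big>$, and its vanishing for $k\leqslant n-1$ is precisely the orthogonality of $\{Q_n\}$ with respect to $\Phi u_0$ --- the very thing still to be established. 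The repair is to reroute: from (c), evaluate the functional $D_{-\omega}(\Phi u_0)+\Psi u_0$ on $D_{-\omega}P_{n+1}$; by \eqref{eq1.8} and \eqref{eq1.11} this equals $\lambda_n\big<u_0,P_{n+1}\big>=0$, and since $D_{-\omega}P_{n+1}$ has degree exactly $n$ these polynomials span $\mathscr P$, so (e) holds; then the symmetry identity together with $\lambda_n\ne0$ yields $\big<\Phi u_0,Q_nQ_m\big>=\frac{\lambda_n}{(n+1)^2}\big<u_0,P_{n+1}^2\big>\delta_{nm}$, i.e.\ (b), after which the truncation in (d) is legitimate and $\alpha_n^0=\big<u_0,P_n^2\big>^{-1}\big<\Phi u_0,Q_n^2\big>\ne0$. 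With these two patches, and with the (e)$\Leftrightarrow$(f) induction on \eqref{eq1.7} carried out as you indicate, the plan does yield a complete proof.
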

Here we will add a new characterization to those established in this proposition. This will be proved once we give the lemma below. 
To begin with, apply the operator $D_\omega$ to \eqref{1.9a}-\eqref{1.9b}, taking into account \eqref{1.10a}-\eqref{1.10b}, we obtain
\begin{subequations}
\begin{align}
P_{n+2}(x) &= Q_{n+2}(x) + {\tilde\alpha_{n+1}}^1Q_{n+1}(x) + {\tilde\alpha_{n}}^0 Q_{n} (x),\ n\geqslant0, \label{1.15a}\\
 P_1(x) &= Q_1(x) +\tilde\alpha_0^1 ,\; P_0(x) = Q_0(x) = 1.\label{1.15b}
 \end{align}
 \end{subequations}
 We will refer to this relation as the {\it second structure relation} of the polynomials $P_n,\,n\geqslant0$.
 This will be instrumental in Section 2 to derive the  system connecting the coefficients $\alpha_{n+2}^2$, $\alpha_{n+1}^1$ and $\alpha_n^0$,
for $n\geqslant0$, and the recurrence coefficients $\beta_n$ and $\gamma_{n+1},\ n\geqslant0$.\\
\noindent Combining \eqref{1.15a}-\eqref{1.15b} with \eqref{1.9a}-\eqref{1.9b} and  then use \eqref{1.10a}-\eqref{1.10b}, we infer that
\begin{align}
\tilde\alpha_n^1=(n+1)(\beta_{n+1}-\tilde\beta_{n}-\omega),\ n\geqslant0,\ \ \mbox{and}\ \ \tilde\alpha_n^0=(n+1)\gamma_{n+2}-(n+2)\tilde\gamma_{n+1},\
 n\geqslant0,\label{eq1.16}
\end{align}
\noindent Reminder that, for the classical orthogonal polynomials ($\omega=0$), the  first structure relation was given by Al Salam and Chihara 
\cite{AlCh}, and the second one was established by Maroni \cite{Maro3}.\par
\noindent When $D_\omega$ is replaced by the finite difference operator $\Delta$, Garcia {\it et al.} \cite{GaMaSa} proved that the structure relations \eqref{eq1.12} and \eqref{1.15a}-\eqref{1.15b}, as well as the functional Rodrigues formula \eqref{eq1.14} characterize the discrete classical polynomials
 of Charlier, Meixner, Krawchuk and Hahn.\par
\noindent We will now return to the operator $D_{\omega}$ and show that \eqref{1.15a}-\eqref{1.15b} also characterize the $D_\omega$-classical orthogonal polynomials. To do so, we need the following lemma.\par
\begin{lemma}[\cite{Maro}]
Let  $\{P_n\}_{n \geqslant 0}$ be a sequence of monic polynomials and let $\{u_n\}_{n \geqslant 0}$ be its associated dual sequence.
For any linear functional $u$ and integer $ m\geqslant 1$, the following statements are equivalent:\\
\noindent ${\rm (i)} \ \ \big< u \, ,\, P_{m-1} \big> \not = 0 \ ;\  \big< u \, ,\, P_{n} \big> = 0 ,\; n\geqslant m;$\\
\noindent ${\rm (ii)}\ \  \exists \ \lambda_\nu \in \mathbb{C} ,\ 0 \leqslant \nu \leqslant m-1 , \ \lambda_{m-1}\not= 0 ,\
\mbox{ such that } u = \sum^{m-1}_{\nu = 0} \lambda_\nu {\it u}_\nu. $
 \end{lemma}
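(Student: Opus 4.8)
The plan is to exploit the fact that, since $\deg P_n = n$ for every $n$, the sequence $\{P_n\}_{n\geqslant 0}$ is a basis of $\mathscr P$: the family $\{P_0,\dots,P_n\}$ spans the polynomials of degree at most $n$, so every $f\in\mathscr P$ is a finite linear combination of the $P_k$. Consequently a functional $w\in\mathscr P'$ is the zero functional if and only if $\big<w,P_n\big>=0$ for all $n\geqslant 0$. This observation is the engine behind both implications, since it lets us verify an equality of functionals by testing against the basis $\{P_n\}$, on which the duality relation $\big<u_\nu,P_n\big>=\delta_{\nu n}$ acts as simply as possible.

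For the implication ${\rm (ii)}\Rightarrow{\rm (i)}$ I would argue by direct computation. Assuming $u=\sum_{\nu=0}^{m-1}\lambda_\nu u_\nu$ with $\lambda_{m-1}\neq 0$, pairing against $P_n$ and using $\big<u_\nu,P_n\big>=\delta_{\nu n}$ gives $\big<u,P_n\big>=\lambda_n$ for $0\leqslant n\leqslant m-1$ and $\big<u,P_n\big>=0$ for $n\geqslant m$, because in the latter case every index $\nu\leqslant m-1$ is strictly less than $n$. In particular $\big<u,P_{m-1}\big>=\lambda_{m-1}\neq 0$, which is exactly (i).

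For the converse ${\rm (i)}\Rightarrow{\rm (ii)}$, the natural move is to define the candidate coefficients by $\lambda_\nu:=\big<u,P_\nu\big>$ for $0\leqslant\nu\leqslant m-1$; hypothesis (i) guarantees $\lambda_{m-1}\neq 0$. I would then form the auxiliary functional $w:=u-\sum_{\nu=0}^{m-1}\lambda_\nu u_\nu$ and show it vanishes. Testing $w$ against $P_n$: for $0\leqslant n\leqslant m-1$ one gets $\big<w,P_n\big>=\lambda_n-\lambda_n=0$, while for $n\geqslant m$ the first term $\big<u,P_n\big>$ vanishes by (i) and the sum contributes nothing since $\nu\leqslant m-1<n$. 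Hence $\big<w,P_n\big>=0$ for all $n$, and by the basis observation $w=0$, yielding the desired representation of $u$.

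The only genuinely structural point — and the place where one must be slightly careful — is the first step: an arbitrary element of the full algebraic dual $\mathscr P'$ need \emph{not} be a finite combination of the $u_\nu$, so the content of the lemma is precisely this finite-support characterization. The argument requires no analytic input; once the spanning property of $\{P_n\}$ is in hand, everything reduces to bookkeeping with the Kronecker pairing, and I do not expect any step to present a real obstacle beyond that observation.
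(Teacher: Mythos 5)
Your proposal is correct and complete: both implications follow exactly as you describe, and the key observation that $\{P_n\}_{n\geqslant0}$ is a basis of $\mathscr P$ (so a functional vanishing on every $P_n$ is the zero functional) is precisely what makes the argument work. The paper itself gives no proof of this lemma --- it is quoted from Maroni's work \cite{Maro} --- and your argument is essentially the standard one found there, so there is nothing further to reconcile.
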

 \begin{proposition}
Let $\{P_n\}_{n\geqslant0}$ be an {\rm OPS} satisfying \eqref{1.9a}-\eqref{1.9b}. The sequence $\{P_n\}_{n\geqslant0}$ is $D_\omega$-classical
if and only if it fulfils \eqref{1.15a}-\eqref{1.15b}.
 \end{proposition}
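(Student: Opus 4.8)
The plan is to prove the two implications separately. The direct implication is, in effect, the computation already displayed above the statement: assuming $\{P_n\}_{n\geqslant0}$ is $D_\omega$-classical, $\{Q_n\}_{n\geqslant0}$ is an OPS satisfying \eqref{1.10a}-\eqref{1.10b}, and I would apply $D_\omega$ to \eqref{1.9a}, use the product rule $D_\omega(fg)(x)=f(x+\omega)(D_\omega g)(x)+g(x)(D_\omega f)(x)$ together with $D_\omega P_{n+1}=(n+1)Q_n$, and then eliminate the term $xQ_n$ through the recurrence \eqref{1.10a}. Only three consecutive $Q$'s survive, which yields \eqref{1.15a}-\eqref{1.15b} with the coefficients \eqref{eq1.16}. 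Hence the real content lies in the converse.

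For the converse I would assume only that $\{P_n\}$ is an OPS (orthogonal with respect to $u_0$) obeying \eqref{1.15a}-\eqref{1.15b}, and aim to show that $u_0$ is $D_\omega$-classical; orthogonality of $\{Q_n\}$ then follows from the equivalence (a)$\Leftrightarrow$(b) of Proposition 1.2. The first step is to convert \eqref{1.15a}-\eqref{1.15b} into a relation between the two dual sequences. Pairing $v_0$ with the $P_m$ and using $\langle v_0,Q_k\rangle=\delta_{0k}$ gives $\langle v_0,P_0\rangle=1$, $\langle v_0,P_1\rangle=\tilde\alpha_0^1$, $\langle v_0,P_2\rangle=\tilde\alpha_0^0$ and $\langle v_0,P_m\rangle=0$ for every $m\geqslant3$. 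Applying Lemma 1.3 to $u=v_0$ then forces $v_0$ to be a finite combination of $u_0,u_1,u_2$, whose coefficients are read off as $\langle v_0,P_\nu\rangle$; hence $v_0=u_0+\tilde\alpha_0^1u_1+\tilde\alpha_0^0u_2$.

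Next I would produce the classical functional equation. Substituting $u_\nu=(\langle u_0,P_\nu^2\rangle)^{-1}P_\nu u_0$ from \eqref{eq1.3} rewrites the last identity as $v_0=\Phi u_0$, where $\Phi:=1+\tilde\alpha_0^1(\langle u_0,P_1^2\rangle)^{-1}P_1+\tilde\alpha_0^0(\langle u_0,P_2^2\rangle)^{-1}P_2$ has degree at most $2$. Applying $D_{-\omega}$ and invoking the structural identity \eqref{eq1.7} at $n=0$, i.e. $D_{-\omega}v_0=-u_1=-(\langle u_0,P_1^2\rangle)^{-1}P_1u_0$, I obtain $D_{-\omega}(\Phi u_0)+\Psi u_0=0$ with $\Psi:=(\langle u_0,P_1^2\rangle)^{-1}P_1$ of degree exactly $1$. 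After rescaling so that $\Phi$ is monic, this is precisely \eqref{eq1.13}; since $u_0$ is regular, Proposition 1.2 (e)$\Rightarrow$(a) shows that $\{P_n\}$ is $D_\omega$-classical.

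The main obstacle is this converse, and inside it the transition from the polynomial identity \eqref{1.15a}-\eqref{1.15b} to a finite relation among the $u_\nu$: a priori $v_0$ is only a formal series in the dual basis, and it is Lemma 1.3 that guarantees the expansion terminates, which is exactly why the lemma is indispensable. I would also need to treat uniformly the degenerate cases where $\tilde\alpha_0^0$, or both $\tilde\alpha_0^0$ and $\tilde\alpha_0^1$, vanish, so that $\deg\Phi$ drops below $2$; this causes no trouble because Proposition 1.2(e) only asks for $\deg\Phi\leqslant2$ together with $\deg\Psi=1$, the latter being automatic since $\langle u_0,P_1^2\rangle\neq0$, and because classicality is insensitive to the scaling used to normalize $\Phi$.
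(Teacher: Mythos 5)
Your proposal is correct and takes essentially the same route as the paper's own proof: the direct implication via applying $D_\omega$ to \eqref{1.9a}-\eqref{1.9b}, and the converse by computing the moments $\big<v_0,P_n\big>$, invoking Lemma 1.3 to get $v_0=u_0+\tilde\alpha_0^1u_1+\tilde\alpha_0^0u_2$, rewriting this as $v_0=\Phi u_0$ through \eqref{eq1.3}, then using \eqref{eq1.7} at $n=0$ to reach $D_{-\omega}(\Phi u_0)+\Psi u_0=0$ and concluding by Proposition 1.2(e). Your extra remarks on the degenerate cases where $\deg\Phi$ drops and on the monic normalization of $\Phi$ are consistent with the paper and merely add rigor.
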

  \begin{proof} The proof is similar in spirit to that of \cite[Proposition 2.10]{GaMaSa}.  The necessary condition has already been shown.
  Conversely, suppose that the OPS $\{P_n\}_{n\geqslant0}$ fulfils \eqref{1.15a} with \eqref{1.15b}. \\
  The action of the functional $v_0$ on both sides of the aforementioned identities gives rise to
\[
\big<v_0 \, ,\, P_0 \big>  =1 \ ,\ \big<v_0 \, ,\, P_1 \big>=\tilde\alpha_0^1\ ,\ \big<v_0 \, ,\, P_2 \big>=\tilde\alpha_0^0 \ \ \mbox{and} \ \
\big<v_0 \, ,\, P_n \big>=0 ,\ \ \mbox{for}\ \ n\geqslant3.
\]
Application of Lemma 1.3 shows that
\begin{align}
 v_0=\lambda_0u_0+\lambda_1u_1+\lambda_2u_2,\label{eq1.17}
 \end{align}
with $\lambda_0=1$, $\lambda_1=\tilde\alpha_0^1=\beta_{1}-\tilde\beta_{0}-\omega$ and $\lambda_2=\tilde\alpha_0^0=\gamma_{2}-2\tilde\gamma_{1}$.\\
Now, the use of \eqref{eq1.3} enables us to write $ u_1=\left(\big<u_0,P^2_1\big>\right)^{-1}P_1u_0$ and $u_2=\left(\big<u_0,P^2_2\big>\right)^{-1}P_2u_0$.\\
In \eqref{eq1.17} we replace $u_1$ and $u_2$ by their respective expressions given above, to deduce that there exists a polynomial $\Phi$, with $\deg\Phi\leqslant2$, such that $v_0=\Phi u_0$.\\
On the other hand, setting  $n=0$ in \eqref{eq1.7}, it follows immediately that
\[
D_{-\omega}\left(v_0\right)=-u_1=-\left(\big<u_0,P^2_1\big>\right)^{-1}P_1u_0:=-\Psi u_0.
\]
Combining these last results, we deduce that
\[
 D_{-\omega}\left(\Phi u_0\right)+\Psi u_0=0,\ \ \mbox{with} \ \ \deg\Phi\leqslant2 \ \ \mbox{and} \ \ \deg\Psi=1.
 \]
By Proposition 1.2, we easily conclude that the orthogonal polynomials sequence $\{P_n\}_{n\geqslant0}$ is $D_\omega$-classical, and the proof is complete.
\end{proof}
 \smallskip
\noindent  At the end  of this section, let us remember the definition of the {\it shifted} polynomials denoted $\{\hat{P}_n\}_{n\geqslant0}$
corresponding to the PS $\{P_n\}_{n\geqslant0}$. For all $n, n=0, 1,\ldots,$ we have
\begin{align}
{\hat P}_n(x) :=\hat{a}^{-n} P_n(\hat{a}x+\hat{b}),\ \hbox{\rm for}\ (\hat{a} ; \hat{b})\in \C^\ast\times\C.\label{eq1.18}
\end{align}
Since the classical character of the considered polynomials is preserved by any linear change of the variable, for the OPS $\{P_n\}_{n\geqslant0}$
satisfying \eqref{1.9a}-\eqref{1.9b}, we obtain that the polynomials ${\hat P}_n, {n=0, 1,\ldots}$, satisfy also the second-order recurrence relation
 \begin{subequations}
\begin{align}
&{{\hat P}}_{n+2}(x) = (x - {\hat\beta}_{n+1}){{\hat P}}_{n+1}(x)-{\hat\gamma}_{n+1}{{\hat P}}_n (x), \ n\geqslant0,\label{1.19a}\\
&{{\hat P}}_1(x) = x - {\hat\beta}_0,\ {{\hat P}}_0(x) = 1,\label{1.19b}
\end{align}
 \end{subequations}
with
\begin{align}
{\hat\beta}_{n}=\frac{{\beta}_{n}-\hat{b}}{\hat{a}},\, n\geqslant0,\ \  \mbox{and}\ \ {\hat\gamma}_{n+1}=\frac{{\gamma}_{n+1}}{\hat{a}^2}, \,
 n\geqslant0\quad (\hat{a}\ne0).\label{eq1.20}
\end{align}
Furthermore, from \eqref{eq1.5}-\eqref{eq1.6} we readily see that \eqref{eq1.18} becomes
 $\hat{P}_n(x) ={\hat{a}}^{-n}\big(h_{\hat{a}}\circ\tau_{-\hat{b}}P_n\big)(x)$.\\
In addition, if $u_0$ is $D_\omega$-classical, then Equation \eqref{eq1.13} leads to
\begin{align}
D_{-\frac{\omega}{\hat{a}}}\big(\hat{\Phi}\hat{u}_0\big)+\hat{\Psi} \hat{u}_0=0,\label{eq1.21}
\end{align}
where $\hat{\Phi}(x)=\hat{a}^{-t}\Phi(\hat{a}x+\hat{b})$, $\hat{\Psi}=\hat{a}^{1-t}\Psi(\hat{a}x+\hat{b})$ and
 $\hat{u}_0=(h_{\hat{a}^{-1}}\circ\tau_{-\hat{b}})u_0$.\par
\smallskip
\noindent The  paper is organized as follows. In the next section we pose and solve two nonlinear systems. The first and most fundamental one relates the recurrence coefficients $\beta_n$,  $\gamma_{n+1}$ with  $\tilde\beta_n$, $\tilde\gamma_{n+1}$.
The second system combines the coefficients $\alpha_n^i,\ i=0, 1, 2\ ;\ \tilde\alpha^j_n,\ j=0, 1$, $\tilde\beta_n$ and $\tilde\gamma_{n+1}$ with those of the polynomial $\Phi$ (Proposition 2.2). This allows to express the coefficients $\alpha_n^i$, in terms of $\beta_n$ or $\gamma_{n+1}$.
In Section 3, we investigate the canonical families of $D_\omega$-classical orthogonal polynomials which we identify after assigning particular values
 to the free parameters.
 The last section is devoted to the sequences of higher order derivatives. We give, principally, the explicit expressions of their recurrence coefficients
  in terms of the coefficients $\big(\beta_n, \gamma_{n+1}\big)_{n\in\N}$. When $\omega=0$, we rediscover the link between every higher order derivative sequences for the classical polynomials of Hermite, Laguerre, Bessel and Jacobi with each of these families.
\section{Computation of the  related coefficients}
In order to compute the various related recurrence coefficients, the first step is to establish the main system connecting the recurrence coefficients
$ \{\beta_n\}_{n\geqslant0} , \, \{\gamma_{n+1}\}_{n\geqslant0}$ with  $\{\tilde\beta_n\}_{n\geqslant0} $, $ \{\tilde\gamma_{n+1}\}_{n\geqslant0}$.
To do this, we proceed as follows. Substituting  in \eqref{1.9a}, $P_{n+2}, P_{n+1}$ and $ P_n$ by their expressions provided in \eqref{1.15a}
we derive a relation in terms of the polynomials $Q_k$ for $ k=n+2,\ldots, n-2$. Now, in this new  relation we replace  $xQ_{n+1}$, $xQ_n$
 and $xQ_{n-1}$ by their respective expressions derived from the recurrence relation \eqref{1.10a} obtaining an expansion depending
 only on the polynomials $Q_{n+2},\ldots, Q_{n-2}$.\\
 After rearranging the terms in the resulting expansion and making some simplifications, the next system (valid for all $n\geqslant1$) follows by identification
   \begin{align*}
&(n+2) \tilde\beta_{n} - n \tilde\beta_{n-1}= (n+1)\beta_{n+1} -(n-1)\beta_{n}-\omega;\\
&2\tilde\beta_0 = \beta_1 +\beta_0 -\omega,\\
&(n\!+\!3) \tilde\gamma_{n+1} -\! (n\!+\!1) \tilde\gamma_n\! =\! (n\!+\!1)\gamma_{n+2}\!-\!(n\!-\!1) \gamma_{n+1}\! +\! (n\!+\!1)\big(\beta_{n+1}\!
-\! \tilde\beta_n\big)\big(\beta_{n+1}\! - \!\tilde\beta_n-\omega\big);\\
&3 \tilde\gamma_1 = \gamma_2 + \gamma_1 + \big(\beta_1 -\tilde\beta_0\big)\big(\beta_1 -\tilde\beta_0-\omega\big),\\
& (n+1)\tilde\gamma_{n} \big(2\beta_{n+1} - \tilde\beta_{n} -\tilde\beta_{n-1}-\omega\big)
-n \gamma_{n+1}\big(\beta_{n+1} + \beta_{n} - 2 \tilde\beta_{n-1}-\omega\big)=0,\\
&(n+2)\tilde\gamma_n\tilde\gamma_{n+1} -2(n+1)\tilde\gamma_n \gamma_{n+2}+n \gamma_{n+1} \gamma_{n+2}=0.
\end{align*}
We should mention here the important role played by the Hahn property (Definition 1.1) to establish such a system, since we have used only the fact that these polynomials as well as their $D_\omega$-derivatives are orthogonal w.r.t. regular forms. In other words, each sequence satisfies a second order recurrence relation, as it is shown in Sec. 1.\\
 When $\omega=0$, we recover the system initiated and solved by Douak and Maroni \cite{DoMa1} whose the solutions provide the classical OPS of Hermite, Laguerre, Jacobi and Bessel, after assigning particular values to the free parameters. We will encounter these families again  in this paper.\par
 \medskip
 \noindent To solve the above system, let us introduce  the auxiliary coefficients $\delta_n$ and $\theta_n$ by writing
\begin{align}
\tilde\beta_n &= \beta_{n+1} + \delta_n \, ,\; n\geqslant 0,\label{eq2.1}\\
{\tilde\gamma_n} &= \frac{n}{n+1}\gamma_{n+1}\theta_n\, ,\; n\geqslant 1, \quad\big(\theta_n\not=0\big).\label{eq2.2}
\end{align}
With these considerations, the two qualities \eqref{eq1.16} take the form
\begin{align}
\tilde\alpha_{n}^1=-(n+1)(\delta_n+\omega),\, \ n\geqslant0 \, ; \ \ \tilde\alpha_{n}^0=(n+1)\gamma_{n+2}(1-\theta_{n+1}),\, \ n\geqslant0.\label{eq2.3}
\end{align}
  \subsection{The coefficients of the recurrence relations}
Our main objective here is to initially compute the auxiliary coefficients $\delta_n$ and $\theta_n$, and then give the explicit expressions of
the  coefficients $\beta_n$ and $\gamma_n$.
This in turn allows to determine the coefficients $\tilde\beta_n$ and $\tilde\gamma_n$ and write significantly better each of the coefficients
$\alpha_{n}^i$ and $\tilde\alpha_{n}^j$.\\
Under the formulas \eqref{eq2.1}-\eqref{eq2.2}, it is easy to see that the above system can be transformed into
\begin{align}
&\beta_{n+1} - \beta_{n} = n\delta_{n-1} - (n+2) \delta_{n}-\omega,\ n\geqslant0,\quad(\delta_{-1}=0),\label{eq2.4}\\
&\big[(n+3) (\theta_{n+1}-1)+1\big]\frac{\gamma_{n+2}}{n+2}-\big[n(\theta_n -1)+1\big]\frac{\gamma_{n+1}}{n+1}=\delta_n(\delta_n+\omega),\ n\geqslant1,
\label{eq2.5}\\
&\big(3 \theta_1 - 2) {\gamma_2} - 2\gamma_1 = 2\delta_0(\delta_0+\omega),\label{eq2.6}\\
&\big[(n+3)(\theta_{n}-1)+1\big]\delta_{n} - \big[(n-1)(\theta_{n}-1)+1\big]\delta_{n-1}+2(\theta_{n}-1)\omega=0,\ n\geqslant1,\label{eq2.7}\\
&\left(\theta_{n+1}-2\right)\theta_n +1=0,\ n\geqslant1,\label{eq2.8}
\end{align}
  To solve this system, we begin with the Riccati equation \eqref{eq2.8} whose solutions are
\begin{align*}
{\bf A.}\quad\theta_n &= 1,\ \ n\geqslant1,\\
{\bf B.}\quad\theta_n &= \frac{n + \theta + 1}{n +\theta},\ \ n\geqslant1,\quad \theta \ne - 1, -2, \ldots.\hskip5cm
\end{align*}
Hence, the first three equations must be examined in the light of these solutions.\par
\medskip
\noindent{\bf Case A.} For $\theta_n = 1$, the above system reduces to
\begin{align}
&\beta_{n+1} - \beta_{n} = n\delta_{n-1} - (n+2) \delta_{n}-\omega,\ n\geqslant0,\label{eq2.9}\\
& \frac{\gamma_{n+2}}{n+2}- \frac{\gamma_{n+1}}{n+1} =\delta_n(\delta_n+\omega),\ n\geqslant0,\label{eq2.10}\\
&\delta_{n+1} - \delta_{n}=0,\ n\geqslant0.\label{eq2.11}
\end{align}
Equation \eqref{eq2.11} clearly shows that $\delta_n=\delta_0,\ n\geqslant0$, giving
 $\delta_n(\delta_n+\omega)=\delta_0(\delta_0+\omega),\ n\geqslant0$.\\
 Thus it is quite natural to single out the  two statements  $\delta_0(\delta_0+\omega)=0$ and $\delta_0(\delta_0+\omega)\ne0$.
 But right now we go back to the two first equations from which we readily deduce that
 \begin{align}
 &\beta_n=\beta_0-(2\delta_0+\omega)n,\ n\geqslant0,\label{eq2.12}\\
 &\gamma_{n+1}=(n+1)\big(\delta_0(\delta_0+\omega)n+\gamma_1\big),\ n\geqslant0.\label{eq2.13}
 \end{align}
If we take $\omega=0$, we recover the recurrence coefficients of the Hermite or Laguerre polynomials as shown in \cite{DoMa1}.
This will be made  more precise in the subcases $\bf A_1$ and $\bf A_2$ below.\par
 \medskip
\noindent {\bf Case B.} For $ \theta_n = {(n + \theta + 1)}/{(n +\theta)}$, the  system \eqref{eq2.4}-\eqref{eq2.8} becomes
\begin{align}
&\beta_{n+1} - \beta_{n} = n\delta_{n-1} - (n+2) \delta_{n}-\omega,\ n\geqslant0,\label{eq2.14}\\
&\frac{(2n+\theta+4)}{(n+\theta+1)} \frac{\gamma_{n+2}}{n+2}-\frac{(2n+\theta)}{(n+\theta)}\frac{\gamma_{n+1}}{n+1} = \delta_n(\delta_n+\omega),
\ n\geqslant0,\label{eq2.15}\\
&\big(2n+\theta+3\big)\delta_{n} - \big(2n+\theta-1\big)\delta_{n-1}=-2\omega,\ n\geqslant1,\label{eq2.16}
\end{align}
 unless, of course,  $\theta$ happen to be zero for the index $n = 0$. So, we will first discuss the solution of the above  system when  $\theta \ne 0$. 
 The  case $\theta = 0$ is special, it will be considered separately. \\
When $\omega=0$, with  appropriate choices of the parameters, the only orthogonal polynomials obtained as solutions of this problem are those of Bessel
 and Jacobi  (see \cite{DoMa1} for more details).\par
\smallskip
\noindent We now return to seeking solutions for the equations \eqref{eq2.14}--\eqref{eq2.16}. Observe first that the RHS of Equation \eqref{eq2.15}  vanishes if and only if $\delta_n=-\omega$ or $\delta_n=0$. Each of these is possible.\\
It is easy to check that the former statement  is dismissed, since it contradicts Equality \eqref{eq2.16}. For the latter, if we replace $\delta_n=0$
in \eqref{eq2.16}, we immediately see that this leads to $\omega=0$.\\
 Straightforwardly from Equations \eqref{eq2.14} and \eqref{eq2.15}, one has
 \begin{align}
 \beta_n=\beta_0\ ; \  \gamma_{n+1}=\gamma_1 \frac{(\theta+2)(n+1)(n+\theta)}{(2n+\theta+2)(2n+\theta)}, \ n\geqslant0.\label{eq2.17}
  \end{align}
If we set $\theta=2\lambda$ and make a linear transformation with $\hat{a}^2=2(\lambda+1)\gamma_1\ ;\ \hat{b}=\beta_0$, then
\begin{align}
 \hat{\beta}_n=0 \ ; \ \hat{\gamma}_{n+1}=\frac{(n+1)(n+2\lambda)}{(n+\lambda+1)(n+\lambda)}, \ n\geqslant0.\label{eq2.18}
 \end{align}
We thus meet the Gegenbauer polynomials which will reappear again in Subcase {$\bf B_{21}$}.\par
\smallskip
\noindent From now on we assume that $\delta_n(\delta_n+\omega)\ne0,\ n\geqslant0.$
Starting from Equation \eqref{eq2.16}, multiply both sides by $2n+\theta+1$, after summation we get
\begin{align}
  \delta_n=\frac{\delta_0(\theta+3)(\theta+1)-2\omega n(n+\theta+2)}{(2n+\theta+3)(2n+\theta+1)},\, n\geqslant0.\label{eq2.19}
\end{align}
Use a division to obtain
\begin{subequations}
\begin{align}
  \delta_{n}&=\frac{2\mu}{\big(2n+\theta+3\big)\big(2n+\theta+1\big)}-\frac{1}{2}\omega,\, n\geqslant0,\label{2.20a}\\
\delta_{n}&=\mu\big(\vartheta_{n}-\vartheta_{n+1}\big)-\frac{1}{2}\omega,\, n\geqslant0,\label{2.20b}
\end{align}
\end{subequations}
where we have written $\mu:=\frac{1}{4}(2\delta_0+\omega)(\theta+3)(\theta+1)$ and $\vartheta_n =\left(2n+\theta+1\right)^{-1}\!,\  n\geqslant 0 $.\\
Thanks to the identity \eqref{2.20b}, we can write
\begin{align}
 \left(2n+\theta+2\right)\delta_n(\delta_n+\omega)= \mu^2\big({\vartheta_{n}^2}-{\vartheta_{n+1}^2}\big)-\frac{1}{4}{\omega^2}(2n+\theta+2),\ n\geqslant 0.
 \label{eq2.21}
 \end{align}
The objective of course is to incorporate this new expression into \eqref{eq2.15} to derive the coefficients $\gamma_n,\ n\geqslant1$, which will in 
fact be processed in a next step. We first calculate the coefficients $\beta_n$. For this, observe that the RHS of \eqref{eq2.14} may be rewritten using 
 Equation \eqref{eq2.16} in the form
 \begin{align}
 n\delta_{n-1} - (n+2)\delta_{n}-\omega=\frac{1}{2}(\theta-1)(\delta_{n} - \delta_{n-1}),\ n\geqslant1.\label{eq2.22}
 \end{align}
 It is easily seen that, if $\theta$ assumes the value $1$, the equation \eqref{eq2.22} provides $\delta_n=-\frac{1}{2}\omega, \, n\geqslant0$.\\
 As a straightforward consequence of this last result,  one sees immediately that \eqref{eq2.14}, \eqref{eq2.15} respectively provides
 \begin{align}
 \beta_{n}&=\beta_0,\ n\geqslant 0,\label{eq2.23}\\
 \gamma_{n+1}&=-\frac{1}{4}\frac{(n+1)^2\left(\omega^2 n(n+2)-12\gamma_1\right)}{(2n+3)(2n+1)},\ n\geqslant0.\label{eq2.24}
 \end{align}
  When $\omega=0$, under the transformation $\hat{a}^2=3\gamma_1, \hat{b}=\beta_0$, we meet the Legendre polynomials.\par
  \medskip
\noindent We now turn to the case $\theta\ne1$. Using the identity \eqref{eq2.22},   Equation \eqref{eq2.14} gives rise to
 \begin{align}
 &\beta_{n+1}-\beta_n=\frac{1}{2}(\theta-1)(\delta_n-\delta_{n-1}),\ n\geqslant 1,\label{eq2.25}\\
  &\beta_1-\beta_0=-(2\delta_0+\omega),\label{eq2.26}
   \end{align}
 with $\delta_n$ is given by \eqref{eq2.19}.  From this, it may be concluded that
   \begin{align}
  \beta_{n}=\beta_0-\frac{(2\delta_0+\omega)(\theta+3)n(n+\theta)}{(2n+\theta+1)(2n+\theta-1)},\ n\geqslant 0.\label{2.27}
 \end{align}
 We can now proceed to compute the coefficients $\gamma_{n+1}, n\geqslant0$. To do so, multiply both sides of Equation \eqref{eq2.15} by $2n+\theta+2$ 
 and set
 \begin{align}
 \Theta_{n+1}=\frac{\left(2n+\theta+2\right)(2n+\theta)}{(n+\theta)}\frac{\gamma_{n+1}}{n+1},\ n\geqslant 0.\label{eq2.28}
 \end{align}
Then, taking into consideration \eqref{eq2.21}, we easily check that \eqref{eq2.15} takes the form
 \begin{align*}
  \Theta_{n+2}- \Theta_{n+1} =\mu^2\big({\vartheta_{n}^2}-{\vartheta_{n+1}^2}\big) -\frac{1}{4}{\omega^2}(2n+\theta+2), \, n\geqslant 0.
\end{align*}
By summation, we deduce that 
 \begin{align}
 \Theta_{n+1}=\Theta_1+\mu^2\big(\vartheta_0^2-\vartheta_n^2\big)-\frac{1}{4}{\omega^2}n(n+\theta+1),\ n\geqslant0.\label{2.29}
 \end{align}
 Substituting \eqref{eq2.28} into \eqref{2.29} yields
\begin{align}
\gamma_{n+1}=-\frac{(n\!+\!1)(n\!+\!\theta)\!\Big\{\!\big[\frac{1}{4}\omega^2n\left(n+\!\theta\!+\!1\right)\!-\!\left(\mu^2\vartheta_0^2\!+\!(\theta\!+\!2)
\gamma_1\right)\!\big](2n+\theta+1)^2\!+\!\mu^2\Big\}}{(2n+\theta+2)(2n+\theta+1)^2(2n+\theta)}.\label{eq2.30}
\end{align}
It is possible to write the expression between braces in the numerator of  \eqref{eq2.30}  in the form
\begin{align*}
\big(\omega n(n+\theta+1)+\varrho n+\rho_1\big)\big(\omega n(n+\theta+1)-\varrho n+\rho_2\big),
\end{align*}
where the three parameters $\rho_1$, $\rho_2$ and $\varrho$ are such that
 \begin{align}
 &(\theta+1)\varrho^2+(\rho_2-\rho_1)\varrho=0,\label{2.31}\\
 &\varrho^2-(\rho_2+\rho_1)\omega=\big((\theta+3)\delta_0+(\theta+2)\omega\big)\big((\theta+3)\delta_0+\omega\big)+4(\theta+2)\gamma_1,\label{eq2.32}\\
 &\rho_2\rho_1=-(\theta+1)^2(\theta+2)\gamma_1.\label{eq2.33}
 \end{align}
The roots of the quadratic equation \eqref{2.31} are $\varrho=0$ and $\varrho={(\rho_1-\rho_2)}/{(\theta+1)}$ for $\rho_2\ne\rho_1$, with the root $\varrho=0$ being double, if $\rho_2=\rho_1$. The last equation clearly shows that $\rho_2\rho_1\ne0$.\\
 All these parameters will be well specified when dealing with the canonical families. But, in any way, we have to consider the following two cases.
\begin{enumerate}[\bf 1.]
\item For $\varrho=0$,  we obtain
\begin{align}
\gamma_{n+1}=-\frac{(n+1)(n+\theta)\big(\omega n(n+\theta+1)+\rho_1\big)\big(\omega n(n+\theta+1)+\rho_2\big)}
{(2n+\theta+2)(2n+\theta+1)^2(2n+\theta)},\ n\geqslant0.\label{eq2.34}
\end{align}
In the particular case $\rho_2=\rho_1:=\rho$, \eqref{eq2.34} simplifies to
\begin{align}
\gamma_{n+1}=-\frac{(n+1)(n+\theta)\big(\omega n(n+\theta+1)+\rho\big)^2}
{(2n+\theta+2)(2n+\theta+1)^2(2n+\theta)},\ n\geqslant0.\label{eq2.35}
\end{align}
\item For the general case $\varrho\ne0$, we have
\begin{align}
\gamma_{n+1}=\!-\frac{(n\!+\!1)(n+\theta)\big(\omega n(n+\!\theta\!+\!1)+\varrho n+\rho_1\big)\big(\omega n(n+\!\theta\!+\!1)-\varrho n+\!\rho_2\big)}
{(2n+\theta+2)(2n+\theta+1)^2(2n+\theta)},\, n\geqslant0.\label{eq2.36}
\end{align}
\end{enumerate}
We now turn to the special case $\theta=0$. Substituting this in \eqref{eq2.4}-\eqref{eq2.7} yields
\begin{align*}
&\beta_{n+1} - \beta_{n} = n\delta_{n-1} - (n+2) \delta_{n}-\omega,\ n\geqslant0,\\
& \gamma_{n+2}- \gamma_{n+1} = \frac{1}{2}(n+1)\delta_n(\delta_n+\omega),\ n\geqslant1,\\
& {\gamma_2} - \frac{1}{2}\gamma_1 = \frac{1}{2}\delta_0(\delta_0+\omega),\\
&(2n+3)\delta_{n} - (2n-1)\delta_{n-1}=-2\omega,\ n\geqslant1.
\end{align*}
The same reasoning applies to this case gives
\begin{align*}
\delta_{n}&= \frac{3\delta_0-2\omega n(n+2)}{(2n+3)(2n+1)},\ n\geqslant 0.\\
\beta_{n}&=\beta_0-\frac{3(2\delta_0+\omega)n^2}{(2n+1)(2n-1)},\ n\geqslant 0.\\
\gamma_{n+1}&=-\frac{\big(\omega n(n+1)+\tau n+\tau_1\big)\big(\omega n(n+1)-\tau n+\tau_2\big)}{4(2n +1)^2},\, n\geqslant1,
 \end{align*}
 where $\tau_1$, $\tau_2$ and $\tau$ are such that
  \begin{align*}
 & \tau^2+(\tau_2-\tau_1)\tau=0,\\
 & \tau^2-(\tau_2+\tau_1)\omega=(3\delta_0+2w)(3\delta_0+\omega)+8\gamma_1,\\
 &\tau_2\tau_1=-2\gamma_1.
 \end{align*}
 When $\omega=0$, if moreover $\delta_0=0$, which we may assume, it follows that
$$ 
\delta_{n}= 0, \  n\geqslant 0, \ \beta_{n}=\beta_0, \  n\geqslant 0, \ \gamma_{n+1}=  \frac{1}{2}\gamma_1,\, n\geqslant1.
$$
Thus,  choosing $\beta_0=0$ and $\gamma_1=\frac{1}{2}$, we meet the Tchebychev  polynomials of the first kind.\par
\medskip
\noindent After having finished solving the first system, we now proceed to the determination of the  coefficients $\tilde\alpha_{n}^j$ and $\alpha_{n}^j$ 
in terms of $\beta_{n}$ and $\gamma_{n+1},\, n\geqslant0$. This will be done in the next subsection.
\subsection{The coefficients of the structure relations}
\begin{proposition}
Let $ \Phi$ be the  polynomial arising in {\rm Proposition $1.2$}. We let the degree of $ \Phi$ to be two $2$ and write $ \Phi(x)=a_2x^2+a_1x+a_0$.
 Then the coefficients implicated in the two structure relations \eqref{eq1.12} and \eqref{1.15a}-\eqref{1.15b}  are interlinked through the following system

\begin{align}
&\alpha_{n+2}^2=a_2,\ n\geqslant0,\label{eq2.37}\\
&\alpha_{n+1}^1 +a_2\tilde\alpha_{n+1}^1=a_2(\tilde\beta_{n+1}+\tilde\beta_n)+a_1,\ n\geqslant0,\label{eq2.38}\\
&\alpha_{n+1}^1\tilde\alpha_{n}^1 +a_2\tilde\alpha_{n}^0+\alpha_{n}^0=a_2(\tilde\gamma_{n+1}+\tilde\gamma_{n}+\tilde\beta_n^2)+a_1\tilde\beta_n+a_0,
\ n\geqslant0,\label{eq2.39}\\
&\alpha_{n+1}^1\tilde\alpha_{n-1}^0+\alpha_{n}^0\tilde\alpha_{n-1}^1=
a_2\tilde\gamma_{n}(\tilde\beta_{n}+\tilde\beta_{n-1})+a_1\tilde\gamma_{n},\ n\geqslant1,\label{eq2.40}\\
&\alpha_{n+1}^0\tilde\alpha_{n-1}^0=a_2\tilde\gamma_{n+1}\tilde\gamma_{n}, \ n\geqslant1,\label{eq2.41}
\end{align}
where we have adopted  the convention  that $\tilde{\gamma}_0:=0$ so that \eqref{eq2.39} remains valid for $n = 0$.
\end{proposition}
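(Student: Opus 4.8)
The plan is to treat both structure relations \eqref{eq1.12} and \eqref{1.15a}-\eqref{1.15b} as expansions in the single basis $\{Q_k\}_{k\geqslant0}$ and to match coefficients. Concretely, I would start from the first structure relation \eqref{eq1.12} and substitute, on its right-hand side, the second structure relation \eqref{1.15a} for each of $P_{n+2}$, $P_{n+1}$ and $P_n$; this rewrites the right-hand side as a linear combination of $Q_{n+2},\,Q_{n+1},\,Q_n,\,Q_{n-1},\,Q_{n-2}$ whose coefficients are built from the $\alpha_{n}^i$ and $\tilde\alpha_n^j$. In parallel, I would expand the left-hand side directly: writing $\Phi(x)=a_2x^2+a_1x+a_0$, the term $\Phi(x)Q_n(x)$ is computed by applying the $Q$-recurrence \eqref{1.10a} in the form $xQ_n=Q_{n+1}+\tilde\beta_nQ_n+\tilde\gamma_nQ_{n-1}$ once to obtain $xQ_n$ and once more to obtain $x^2Q_n$, again producing a combination of $Q_{n+2},\dots,Q_{n-2}$, now with coefficients built from $a_0,a_1,a_2$ and the recurrence data $\tilde\beta_k,\tilde\gamma_k$.

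Since $\{Q_k\}$ is a sequence of monic polynomials of strictly increasing degree, it is a basis of $\mathscr P$, so the two expansions must agree coefficient by coefficient. Equating the coefficients of $Q_{n+2}$ gives $\alpha_{n+2}^2=a_2$, that is \eqref{eq2.37}; substituting this back, the coefficients of $Q_{n+1}$, $Q_n$, $Q_{n-1}$ and $Q_{n-2}$ yield \eqref{eq2.38}, \eqref{eq2.39}, \eqref{eq2.40} and \eqref{eq2.41} respectively. The matching of $Q_{n-1}$ produces \eqref{eq2.40} directly, while the matching of $Q_{n-2}$ produces \eqref{eq2.41} after the relabelling $n\mapsto n+1$; both are stated for $n\geqslant1$ because the coefficients of $Q_{n-1}$ and $Q_{n-2}$ only enter once $n$ is large enough. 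The one routine piece is the $x^2Q_n$ expansion: collecting terms gives the coefficient $a_2(\tilde\gamma_{n+1}+\tilde\beta_n^2+\tilde\gamma_n)+a_1\tilde\beta_n+a_0$ on $Q_n$ and $a_2\tilde\gamma_n(\tilde\beta_n+\tilde\beta_{n-1})+a_1\tilde\gamma_n$ on $Q_{n-1}$, which are exactly the right-hand sides required.

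The only real care needed is with the low-index boundary terms, and this is where I expect the minor obstacle to lie. For small $n$ the substitution \eqref{1.15a} and the iterated recurrence formally involve $Q$'s of negative index (for instance $Q_{n-1}$ or $Q_{n-2}$ when $n=0$), together with the exceptional initial data \eqref{1.10b} and \eqref{1.15b}. I would handle these uniformly by adopting the stated convention $\tilde\gamma_0:=0$, which annihilates precisely the spurious $\tilde\gamma_0$-term and makes \eqref{eq2.39} valid already at $n=0$; the remaining initial identities are then checked directly against $Q_0=1$, $Q_1=x-\tilde\beta_0$ and the initial relations \eqref{1.10b}, \eqref{1.15b}. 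Once these boundary cases are reconciled, the full system \eqref{eq2.37}-\eqref{eq2.41} is established.
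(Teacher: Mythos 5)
Your proposal is correct and follows essentially the same route as the paper: expand both sides of \eqref{eq1.12} in the basis $\{Q_k\}$ --- the left-hand side by applying the recurrence \eqref{1.10a} twice to form $\Phi(x)Q_n$, the right-hand side by substituting \eqref{1.15a} for $P_{n+2},P_{n+1},P_n$ --- and then identify coefficients of $Q_{n+2},\ldots,Q_{n-2}$, with the relabelling $n\mapsto n+1$ for \eqref{eq2.41} and the convention $\tilde\gamma_0:=0$ handling the boundary cases. Your write-up is in fact more explicit than the paper's sketch, which omits the coefficient computations you carried out.
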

\begin{proof}
 As for the preceding system, we give only the main ideas of the proof. First, comparison of coefficients in \eqref{eq1.12} shows that $\alpha_{n+2}^2=a_2$.
 Now, use the recurrence relation \eqref{1.10a} twice to write the product $\Phi(x) Q_{n}$, which is the LHS of \eqref{eq1.12}, in terms of the polynomials $Q_{n+2},\ldots, Q_{n-2}$.
Then replace in the RHS of \eqref{eq1.12} $P_{n+2}, P_{n+1}$ and $P_{n}$ by their expressions provided in \eqref{1.15a} to obtain another expansion in
terms of the polynomials $Q_{n+2},\ldots, Q_{n-2}$. After some simplifications, and by identification the equations \eqref{eq2.38}-\eqref{eq2.41} follow.
\end{proof}\par
\noindent As far as the author knows, the technique used here to find  explicit expressions for the coefficients involving in \eqref{1.15a} with \eqref{1.15b} is new, and the results obtained still unknown.
So, the solution of the above system brings  an answer to this question. But before doing so, recall that when the form $u_0$ is $D_\omega$-classical,
namely, it is regular and satisfies \eqref{eq1.13}, the polynomials $ \Phi$ and $ \Psi$ necessarily satisfy (see \cite[p.7]{AbMa} for further details):
\begin{align}
\kappa \Phi(x) &=(1\!-\!\theta_1)x^2-\!\bigl((1\!-\!\theta_1)(\beta_1+\beta_0)+\delta_0+\omega\bigr)x+
\bigl((1\!-\!\theta_1)\beta_1+\delta_0+\omega\bigr)\beta_0+\theta_1\gamma_1, \label{eq2.42}\\
\kappa \Psi(x) &=P_1(x)=x-\beta_0.\label{eq2.43}
\end{align}
Note that the expression in the RHS of \eqref{eq2.42} is slightly transformed in accordance with the relations \eqref{eq2.1} and \eqref{eq2.2}.
The coefficient $\kappa$ is to be chosen later so that $\Phi(x)$ is being monic.\par
\smallskip
\noindent Since we are proceeding following the two situations {Case \bf A}  and {Case \bf B}, we see that the leading coefficient  $a_2$ of $\Phi(x)$ assumes the value $0$, when $\theta_1=1$, or is such that $\kappa a_2= -(\theta+1)^{-1}$, when $\theta_1=(\theta+2)/(\theta+1)$. To get $a_2=1$ in the 
latter case, we  choose $\kappa = -(\theta+1)^{-1}$ which, in turn, allows to write the polynomial $\Phi(x)$ in one of the four standard forms
\[
\Phi(x)=1,\ \Phi(x)=x,\ \Phi(x)= x^2\ \mbox{and}\ \Phi(x)=(x+1)(x-c),\ c\in \C\backslash\{-1\}.
\]
The classification achieved according to the degree of $\Phi$ as in \cite{AbMa} is of course  exhaustive and is equivalent to that  based on the values 
of $\theta_n$. It is this second alternative that we will retain in the next section to go over the diverse families of $D_\omega$-classical orthogonal polynomials or some relevant cases. But before doing so, let us discuss the solutions of the system \eqref{eq2.37}-\eqref{eq2.41}  when $a_2$ takes one
 of the values  $0$ or $1$, with use of \eqref{eq2.3}.\par
\smallskip
 \noindent{\bf I.}  For $a_2=0$, since $\alpha_{n}^0\ne0$ for each $n$, Equation \eqref{eq2.41} readily gives $\tilde\alpha_{n}^0=0, n\geqslant0$,
  so, due to \eqref{eq2.3}, we necessarily have $\theta_{n}=1$ for all $n\geqslant1$. It turns out that $\delta_n=\delta_0, n\geqslant0$, 
  and so $\tilde\alpha_{n}^1=-(n+1)(\delta_0+\omega), n\geqslant0$. In this case, it is easily seen that the coefficients $\beta_n$ and $\gamma_{n+1}$
   are  given by \eqref{eq2.12}-\eqref{eq2.13}.\\
This  actually happens in the case {\bf A} when the polynomial  $\Phi$ takes the form
\begin{align}
\kappa \Phi(x) =-(\delta_0+\omega)x+(\delta_0+\omega)\beta_0+\gamma_1.\label{eq2.44}
  \end{align}
Using the fact that the polynomial $\Phi(x)$ is monic, we are brought to consider the following two situations according with the degree of this polynomial.
  \newcounter{saveenum}
  \begin{enumerate}[{\bf (i)}]
\item $\Phi(x)$ is constant. In this case we have $\delta_0+\omega=0$ which leads to $a_2=a_1=0$ and $a_0=1$. \\
It follows that $\kappa=\gamma_1$, $\Phi(x)=1$ and $\Psi(x)=\gamma_1^{-1}P_1(x)$. Therefore $\tilde\alpha_{n}^1=0,\ n\geqslant0$,
 and  the above system readily gives
$ \alpha_{n+2}^2=\alpha_{n+1}^1=0\ ; \ \alpha_{n}^0=1, \, n\geqslant0.$\\
Thus, $Q_n=P_n,\, n\geqslant0$, and so the two structure relations  coincide.
 \item $ \Phi(x)$ is linear.  By setting $\kappa=-(\delta_0+\omega)\ne0$ and  $(\delta_0+\omega)\beta_0+\gamma_1=0$, we conclude that
 $a_2=a_0=0$ and $a_1=1$  and so $\Phi(x)=x$ and $\Psi(x)=\gamma_1^{-1}\beta_0P_1(x)$. We thus have
\setcounter{saveenum}{\value{enumi}}
$$
\alpha_{n+2}^2=0 \ ; \ \alpha_{n+1}^1=1\ \hbox{and} \ \alpha_{n}^0=\beta_0-{\delta}_0{n},\, n\geqslant0.
$$
 Accordingly, the two structure relations may be written as follows
 \begin{align*}
 xQ_n(x)&=P_{n+1}(x)+(\beta_0-\delta_0 n)P_{n}(x),\, n\geqslant0,\\
 P_n(x)&=Q_n(x)-(\delta_0+\omega)n Q_{n-1}(x),\, n\geqslant0,\  (Q_{-1}:=0).
 \end{align*}
For $n=1$ in the first relation, we recover the equality $(\delta_0+\omega)\beta_0+\gamma_1=0$, that is, $\kappa\beta_0=\gamma_1$.
This interconnection between  $\beta_0$ and $\gamma_1$ will prove useful in Subcase ${\bf A_2}$.
  \end{enumerate}\par
 \noindent{\bf II.} For $a_2\ne0$, $\Phi(x)$ is then quadratic and so $\kappa=1\!-\theta_1\!=-(\theta+1)^{-1}$, since we take $a_2$ to be $1$. \\
Changing $n$ into $n+1$ in \eqref{eq2.41} yields  $\alpha_{n+2}^0\tilde\alpha_{n}^0\ne0, \, n\geqslant0$, from which we see that the coefficients
  $\displaystyle\tilde\alpha_{n}^0$ are not identically $0$ for all $n$, and hence, due to \eqref{eq2.3}, we conclude that $\theta_{n}\ne1, \ n\geqslant1$.
   This in fact shows that the case {\bf B} is the one to be naturally considered here. We thus have
$$
\tilde\alpha_{n}^1=-(n+1)(\delta_n+\omega)\ \ \mbox{and}\ \ \tilde\alpha_{n}^0=-\frac{n+1}{n+\theta+1}\gamma_{n+2}, \ n\geqslant0.
$$
 Additionally,  the coefficients $\beta_n$ are given by \eqref{2.31}, while the $\gamma_{n}$  are generated either by \eqref{eq2.34} or by \eqref{eq2.36}.
Moreover, taking into account the position of the zeros of the polynomial $\Phi$, we have to consider again two subcases .
\begin{enumerate}[{\bf (i)}]
\setcounter{enumi}{\value{saveenum}}
\item $\Phi(x)=x^2$. Since $a_2=1$ and  $a_1= a_0=0$, \eqref{eq2.42} leads to $\beta_1+\beta_0=(\theta+1)(\delta_0+\omega)$ and $\beta_0^2=-(\theta+2)\gamma_1$.
Analogously to the former case, taking into consideration \eqref{eq2.3},  the system  \eqref{eq2.37}-\eqref{eq2.41} readily provides
\begin{align*}
\alpha_{n+2}^2=1\ ;\ \alpha_{n+1}^1=\beta_{n+1}+\beta_n+n(\delta_{n-1}+\omega)\ \ \mbox{and}\ \ \alpha_{n}^0=-\frac{n+\theta+1}{n+1}\gamma_{n+1},\,
 n\geqslant0.
 \end{align*}
\item $\Phi(x)=(x+1)(x-c),\ c\ne-1$. We  have $a_2=1$, $a_1=1-c$ and   $a_0=-c$.
Therefore, \eqref{eq2.42} gives rise to
 $\beta_1+\beta_0=(\theta+1)(\delta_0+\omega)+c-1$ and $(\beta_0+1)(\beta_0-c)=-(\theta+2)\gamma_1$.
In the same manner we can see that
\begin{align*}
 \alpha_{n+2}^2=1\ ; \ \alpha_{n+1}^1=\beta_{n+1}+\beta_n+n(\delta_{n-1}+\omega)-c+1\ \hbox{and} \ \alpha_{n}^0=-\frac{n+\theta+1}{n+1}\gamma_{n+1},
 \, n\geqslant0.
 \end{align*}
\end{enumerate}
 \section{The canonical families of $D_\omega$-classical  polynomials}
In order to present an exhaustive classification of those polynomials, we will examine different situations in both cases {\bf A} and {\bf B}. Under 
certain restrictions  on the  parameters, we rediscover the well-known families of discrete classical orthogonal polynomials or some particular cases.\par
 \noindent Since we are only interested in regular OPS, the  finite sequences are not considered here.\\
 We often use the linear transformation \eqref{eq1.18} with \eqref{eq1.20}  to provide  the desired results. This is also achieved on account of the 
 specific conditions observed in Subsection 2.2. For each situation, we summarise the relevant properties of the corresponding family of polynomials.\par
 \medskip
 \noindent  $\triangleright$ {\bf Case A.} We first investigate the two main subcases, namely, $\delta_0+\omega=0$ and $\delta_0+\omega\ne0$. Then, we consider the particular subcase when $\delta_0$ assumes the value $0$. \par
    \smallskip
 \noindent{$ \bf A_1:$} $\delta_0+\omega=0$. From \eqref{eq2.12}-\eqref{eq2.13}, taking into account \eqref{eq2.1}-\eqref{eq2.2}, we immediately obtain
 $\tilde\beta_n=\beta_n=\beta_0+\omega n\ ; \ \tilde\gamma_{n+1}=\gamma_{n+1}=\gamma_1(n+1),\  n\geqslant0$. It follows that $Q_n=P_n,\, n\geqslant0$,
  and hence the PS $\{P_n\}_{n\geqslant0}$ belongs to the class of the so-called $D_\omega$-Appell sequences.
  \smallskip
\begin{enumerate}
\item[{$\bf  A_{1a}.$}] With the choice $\hat{a}^2=2\gamma_1$ and $\hat{b}=\beta_0$,  we easily get $\hat{\beta}_n=\frac{\omega}{\hat{a}}n\ ;\
 \hat{\gamma}_{n+1}=\frac{1}{2}(n+1),\ n\geqslant0$.
  Now, replacing $\omega$ by $\hat{a}\omega$,  we obtain $\hat{\beta}_n={\omega}n\ ;\ \hat{\gamma}_{n+1}=\frac{1}{2}(n+1),\ n\geqslant0$.\\
When $\omega=0$, and so $\delta_0=0$, we meet again the Hermite polynomials.
\item[{$\bf  A_{1b}.$}] The choice $\hat{a}=\omega$ and $\hat{b}=\beta_0-\omega a$  with $\gamma_1=a\omega^2$  gives $\hat{\beta}_n=a+n\ ;\
\hat{\gamma}_{n+1}=a(n+1),\ n\geqslant0$.\\
 We thus encounter the Charlier polynomials \cite{Char}.
 \end{enumerate}
 \smallskip
\noindent{$ \bf A_2:$} $ \delta_0+\omega\ne0$. We can assume without loss of generality that $\delta_0(\delta_0+\omega)=1\Leftrightarrow
\omega=\delta_0^{-1}-\delta_0$. Use of \eqref{eq2.12}-\eqref{eq2.13} then shows that
{$\; {\beta}_n=\beta_0-(\delta_0^{-1}+\delta_0)n\ ; \ {\gamma}_{n+1}=(n+1)\big(n+\gamma_1\big),\  n\geqslant0,$}
where the parameters $\beta_0$ and $\gamma_1$ are related via $\beta_0=-\delta_0^{-1}\gamma_1$ as   in the statement {\bf I-(ii)} above. \\
 By setting  $\gamma_1:=\alpha+1$, we can write $\beta_0=-\delta_0^{-1}(\alpha+1)$. \\
 If we take now $\omega=0$ which yields $\delta_0^2=1$, we recover the Laguerre polynomials for $\delta_0=-1$, and the shifted
 Laguerre polynomials for $\delta_0=1$.\\
For $\delta_0=-1$, we  have $\ {\beta}_n=2n+\alpha+1\ ; \ {\gamma}_{n+1}=(n+1)\big(n+\alpha+1\big),\  n\geqslant0.$
 \par
\medskip
\noindent From now on, we assume that $\delta_0\ne-1$ and set $\gamma_1:=\alpha+1$ again. We  wish to  examine  two interesting situations already investigated in \cite{AbMa} with  slight differences in  notation.\par
\smallskip
 \noindent The first one occurs for $\delta_0:=-e^{-\varphi}, \varphi\ne0$, so that $\omega=-2\sinh\varphi$. It follows that
 \[
 {\beta}_n=e^{\varphi}(\alpha+1)+2n \cosh{\varphi}\ ; \  {\gamma}_{n+1}=(n+1)(n+\alpha+1) ,\ n\geqslant0.
 \]
Afterwards,  choose $\hat{a}=\omega\ ;\ \hat{b}=0$ and put $c:=e^{2\varphi}$, to get
 \[
 \hat{\beta}_n={\frac{c}{1-c}}(\alpha+1)+\frac{1+c}{1-c}n\ ; \ \hat{\gamma}_{n+1}=\frac{c}{(1-c)^2}(n+1)(n+\alpha+1),\ n\geqslant0.
  \]
  When the parameter $c\in\R\backslash\{0,1\}$, we obtain the Meixner polynomials of the first kind \cite{Meix}.
  The Krawtchouk polynomials are a special case of the Meixner polynomials of the first kind. \par
  \smallskip
  \noindent The second situation appears for $\delta_0:=e^{i\phi},\ 0<\phi<\pi$ by taking $2\lambda=\gamma_1:=\alpha+1$.
  After making a linear transformation via the changes $\hat{a}=i\omega\ ;\ \hat{b}=-\lambda\omega$, we obtain that\\
 \[
 \hat{\beta}_n=-(n+\lambda)\cot\phi\ ; \ \hat{\gamma}_{n+1}=\frac{1}{4}\frac{(n+1)(n+2\lambda)}{\sin^2\phi},\ n\geqslant0.
 \]
From this, we conclude that  the resulting polynomials are those of Meixner-Pollaczek \cite{Poll}. \par
\smallskip
\noindent {$\bf A_3:$} For $\delta_0=0$, one sees immediately that $\beta_n=\beta_0-\omega n\ ;\ \gamma_{n+1}=\gamma_1(n+1),\ n\geqslant0$.
From this, taking into consideration \eqref{eq2.1}-\eqref{eq2.2}, we check at once that $\tilde\beta_n=\beta_n-\omega $ and
$\tilde\gamma_{n+1}=\gamma_{n+1},\ n\geqslant0$.\\
In accordance with \eqref{eq1.18}, it is easily seen  that $Q_n(x)=\tau_{-\omega}P_n(x)=P_n(x+\omega)$ for all $ n\geqslant0$.\\
It is worth pointing out the following two subcases for which we can  proceed analogously to the generation of {$\bf A_{1a}$} and {$\bf A_{1b}$}.
\smallskip
\begin{enumerate}
 \item[{$\bf A_{3a.}$}] Similarly to {$\bf A_{1a}$}, the choice $\hat{a}^2=2\gamma_1\ ;\ \hat{b}=\beta_0$ yields
 $\hat{\beta}_n=-\frac{\omega}{\hat{a}}n\ ;\
  \hat{\gamma}_{n+1}=\frac{1}{2}(n+1),\  n\geqslant0$.  Replacing $\omega$ by $-\hat{a}\omega$, we get $\hat{\beta}_n={\omega}n\ ;\
  \hat{\gamma}_{n+1}=\frac{1}{2}(n+1), \ n\geqslant0$, which for $\omega=0$ gives rise to the Hermite polynomials.
\item[{$\bf  A_{3b}.$}] If we take $\hat{a}=-\omega\ ;\ \hat{b}=\beta_0+\omega a$ with $\gamma_1=a\omega^2$, we get the Charlier polynomials again.\\
Note that, with $\hat{a}=i\omega\ ;\ \hat{b}=\beta_0+i\omega b$ and $\gamma_1=-a\omega^2$, a specific case relative to this situation have been mentioned
in \cite{AbMa}, where $\hat{\beta}_n=b+in\ ;\ \hat{\gamma}_{n+1}=-a(n+1),\  n\geqslant0$.
 \end{enumerate}\par
 \noindent Note that the resulting polynomials encountered here are those obtained in {$\bf A_{1a}$} and {$\bf A_{1b}$}.\par
 \smallskip
 \noindent $\triangleright$ {\bf Case B.} Two main situations will be also  investigated with some of their special subcases.
The remarks referred to in the statements {\bf II-(iii)} and {\bf II-(iv)} above must of course be taken into account to choose more precise certain parameters involved in the recurrence coefficients.\\
 In what follows, unless otherwise stated,  we assume  that $\theta\ne1$.\\
\smallskip
 \noindent{$ \bf B_1$:} $\theta=2\alpha-1$. From \eqref{2.27} and \eqref{eq2.30}, if we choose $\beta_0=\frac{1}{2}\alpha\omega-\mu_1$,  we get
 \begin{align}
  \beta_{n}&=\frac{1}{2}\alpha\omega-\frac{\alpha(\alpha-1)\mu_1}{(n+\alpha)(n+\alpha-1)},\ n\geqslant 0,\label{eq3.1}\\
\gamma_{n+1}&=-\frac{(n+1)(n+2\alpha-1)\big(\omega (n+\alpha)^2-2\alpha\mu_1\big)^2}{(2n+2\alpha+1)(2n+2\alpha)^2(2n+2\alpha-1)},\
n\geqslant0, \label{eq3.2}
\end{align}
where we have set $\mu_1:=-\frac{1}{2}(\alpha+1)(2\delta_0+\omega)$.\\
Note that \eqref{eq3.1} is valid for $n= 0$, except that it becomes worthless if concurrently $\alpha=1$.\\
\smallskip
\noindent For $\alpha=\frac{1}{2}$, and so $\theta=0$, the coefficients $\beta_{n}$ and $\gamma_{n+1}$ coincide with those previously established in Subsection 2.1.\par
\smallskip
\noindent{$\bf  B_{11}.$} For $\mu_1\ne0$, the choice $\hat{a}=\alpha\mu_1=1\ ;\ \hat{b}=0$, leads to
\begin{align}
  \hat{\beta}_{n}&=\frac{1}{2}\alpha\omega+\frac{1-\alpha}{(n+\alpha)(n+\alpha-1)},\ n\geqslant 0,\label{eq3.3}\\
\hat{\gamma}_{n+1}&=-\frac{(n+1)(n+2\alpha-1)\big(\frac{1}{2} \omega (n+\alpha)^2-1\big)^2}{(2n+2\alpha+1)(n+\alpha)^2(2n+2\alpha-1)},\
 n\geqslant0\label{eq3.4}.
\end{align}
When $\omega=0$, we clearly encounter the Bessel polynomials.\par
\smallskip
 \noindent{$\bf B_{12}.$} For $\omega\ne0$ with the choice  $\hat{a}=i\omega\ ; \ \hat{b}=\frac{1}{2}\alpha\omega$, another specific case was discovered
 in \cite{AbMa}. If on top of that we take $\mu_1=0$ and $\alpha>0$, the corresponding orthogonal polynomials are symmetric and their associated form
 is positive definite.\par
 \smallskip
\noindent{$\bf B_2:$}  $ \theta=\nu-1$. Let  $c\in\C-\{-1\}$, thus from \label{eq2.27} and \label{eq2.31}, we obtain
 \begin{align}
  \beta_{n}&=\frac{1}{4}\nu\omega-\frac{1}{2}(1-c) +\frac{\nu(\nu-2)\mu_2}{(2n+\nu)(2n+\nu-2)},\ n\geqslant 0,\label{eq3.5}\\
\gamma_{n+1}&=-\frac{(n+1)(n+\nu-1)}{(2n+2\nu+1)(2n+2\nu)^2(2n+2\nu-1)}\times\notag\\
&\big[\omega n(n+\nu)+(1+c)n+\nu(\beta_0+1)\big]\big[\omega n(n+\nu)-(1+c)n+\nu(\beta_0-c)\big],\ n\geqslant0, \label{eq3.6}
\end{align}
where we have put $\mu_2:=\frac{1}{4}(\nu+2)(2\delta_0+\omega)$ and chosen $\beta_0:=\frac{1}{4}\nu\omega-\frac{1}{2}(1-c)+\mu_2$.\par
\medskip
\noindent{$\bf  B_{21}.$} For $\omega=0$ and  $c=1$, we may write $\nu(1+\beta_0)=2(\alpha+1)$ and $\nu(1-\beta_0)=2(\beta+1)$, so that
\[
 \nu=\alpha+\beta+2\ \ \mbox{and}\ \ \beta_0=\frac{\alpha-\beta}{\alpha+\beta+2}.
 \]
Hence, we rediscover the Jacobi polynomials whose coefficients are
 \begin{align*}
 \beta_{n}&=\frac{\alpha^2-\beta^2}{(2n+\alpha+\beta+2)(2n+\alpha+\beta)},\ n\geqslant 0,\\
\gamma_{n+1}&=4\frac{(n+1)(n+\alpha+\beta+1)(n+\alpha+1)(n+\beta+1)}{(2n+\alpha+\beta+3)(2n+\alpha+\beta+2)^2(2n+\alpha+\beta+1)},\ n\geqslant0.
\end{align*}
For $\alpha=\beta=\lambda-\frac{1}{2}$, we meet again the Gegenbauer polynomials.\par
\medskip
\noindent{$\bf B_{22}.$} When $\omega\ne0$, we first write the two expressions between square brackets in \eqref{eq3.6} as follows
\begin{align*}
\omega n(n+\nu)+(1+c)n+\nu(\beta_0+1)&=\big[\omega(n+\beta+1)+(1+c)\big](n+\alpha+1),\\
\omega n(n+\nu)-(1+c)n+\nu(\beta_0-c)&=\big[\omega(n+\alpha+1)-(1+c)\big](n+\beta+1).
\end{align*}
If we set $\eta:=\alpha-{(1+c)}/{\omega}$ and apply a linear transformation with $\hat{a}=i\omega\; ;\; \hat{b}=\frac{1}{4}\nu\omega-\frac{1}{2}(1-c)$,
 we may rewrite \eqref{eq3.5}-\eqref{eq3.6} as
\begin{align}
  \hat{\beta}_{n}&=\frac{1}{2}i(\alpha^2-\beta^2)\frac{\eta-\frac{1}{2}(\alpha+\beta)}{(2n+\alpha+\beta+2)(2n+\alpha+\beta)},\label{eq3.7}\\
  \hat{\gamma}_{n+1}&=\frac{(n+1)(n+\alpha+\beta+1)(n+\alpha+\beta+1-\eta)(n+\eta+1)(n+\alpha+1)(n+\beta+1)}
                      {(2n+\alpha+\beta+3)(2n+\alpha+\beta+2)^2(2n+\alpha+\beta+1)}.\label{eq3.8}
\end{align}
Observe that if $\alpha^2-\beta^2=0$ or $\eta=\frac{1}{2}(\alpha+\beta)$, the corresponding polynomials are symmetric. In consequence, it is worth while
 to discuss the following subcases mentioned in \cite{AbMa}.\par
\medskip
\noindent{$\bf  B_{22a}$.} For $\alpha-\beta=0$, we obtain
\begin{align}
\hat{\gamma}_{n+1}=\frac{1}{4}\frac{(n+1)(n+2\alpha+1)(n+2\alpha+1-\eta)(n+\eta+1)}{(2n+2\alpha+3)(2n+2\alpha+1)},\ n\geqslant0.\label{eq3.9}
\end{align}
 When $-1<\eta<2\alpha+1$ or when $\alpha\in \R$ and $\eta+\bar{\eta}=2\alpha,\ \alpha+1>0$, the obtained polynomials are orthogonal with respect to a positive definite form.\\
- For $\alpha=0$, the resulting polynomials are related to Pasternak polynomials \cite{Past} having the coefficients
\begin{align}
\hat{\gamma}_{n+1}=\frac{1}{4}\frac{(n+1)^2(n+\eta+1)(n-\eta+1)}{(2n+3)(2n+1)},\ n\geqslant0.\label{eq3.10}
\end{align}.\\
- For $\eta=0$, the resulting polynomials are the Touchard ones \cite{Touc} which are themselves particular cases of the continuous Hahn polynomials
\cite{Aske}. In this case we have
\begin{align}
\hat{\gamma}_{n+1}=\frac{1}{4}\frac{(n+1)^2(n+\alpha+1)(n-\alpha+1)}{(2n+3)(2n+1)},\ n\geqslant0.\label{eq3.11}
\end{align}\par
\medskip
\noindent {$\bf  B_{22b}$.} For $\alpha+\beta=0$, we get
\begin{align}
\hat{\gamma}_{n+1}=\frac{1}{4}\frac{(n+\alpha+1)(n-\alpha+1)(n+\eta+1)(n-\eta+1)}{(2n+3)(2n+1)},\ n\geqslant0.\label{eq3.12}
\end{align}
Again, observe that the associated form to these orthogonal polynomials is positive definite when $-1<\eta<2\alpha+1$ or when $\alpha\in \R$ and
$\eta+\bar{\eta}=2\alpha,\ \alpha+1>0$.\\
- When $\alpha=0$, this leads to the Pasternak polynomials.\\
- When $\eta=0$, we meet again the Touchard polynomials. \par
\medskip
\noindent{$\bf  B_{22c}$.} For $\eta=\frac{1}{2}(\alpha+\beta)$, we get
\begin{align}
\hat{\gamma}_{n+1}=\frac{1}{4}\frac{(n+1)(n+\alpha+\beta+1)(n+\alpha+1)(n+\beta+1)}{(2n+\alpha+\beta+3)(2n+\alpha+\beta+1)},\ n\geqslant0.\label{eq3.13}
\end{align}
For $\alpha+1>0$ and $\beta+1>0$, we have $\hat{\gamma}_{n+1}>0$, for all $n\geqslant0$. The form associated to these polynomials is then positive definite.\\
 On the other hand, if both $\alpha$ and $\beta$ are assumed to be real numbers, and so $\eta+\bar{\eta}=\alpha+\beta$, it is possible to find two 
 numbers $a$ and $b$, such that\\
$$
a+\bar{a}=\alpha+1,\, b+\bar{b}=\beta+1,\, a+\bar{b}=\eta+1 \ \mbox{and}\ \bar{a}+b=\alpha+\beta+1-\eta.
$$
In addition, under the   conditions  $\Re{a}>0$ and $\Re{b}>0$, it was shown in \cite[p.18]{AbMa} that the resulting polynomials are orthogonal w.r.t. 
a positive definite form and coincide with the continuous Hahn polynomials. For more details we refer the reader to the aforementioned paper.
   \section{The recurrence coefficients of the higher  order  derivatives}
Let $k$ be a positive integer and let  $\{P_n\}_{n\geqslant0}$ be a $D_\omega$-classical OPS.
The sequence of the normalized higher order $D_\omega$-derivatives, denoted as $\{P^{[k]}_n\}_{n\geqslant0}$, is recursively defined by
 \begin{subequations}
  \begin{align}
 &P^{[k]}_n(x):=\frac{1}{n+1} D_\omega P^{[k-1]}_{n+1}(x), \ \ k\geqslant1,\label{eq4.1a}\\
 \text{or, equivalently,}\hskip2cm \notag\\
 &P^{[k]}_n(x):=\frac{1}{(n+1)_k} D^k_\omega P_{n+k}(x), \ \ k\geqslant1,\hskip3cm \label{eq4.1b}
  \end{align}
  \end{subequations}
where $(\mu)_n=\mu(\mu+1)\cdots(\mu+n-1), \, (\mu)_0=1, \ \mu\in\C, \, n\in\N$,  is the Pochhammer symbol.
Application of Definition 1.1,  with the special notations $P^{[1]}_n:=Q_n$ and $P^{[0]}_n:=P_n$, enables one  to write $\beta^{[1]}_n:=\tilde\beta_n$, $\gamma^{[1]}_{n}:=\tilde\gamma_{n}$ and  $\beta^{[0]}_n:= \beta_n$ , $\gamma^{[0]}_{n}:=\gamma_{n}$, respectively.\\
The following corollary, which is in fact an immediate consequence of Proposition 1.2, plays an important role in establishing our results.
  \begin{corollary}[\cite{AbMa}]
If the OPS $\{P_n\}_{n\geqslant0}$ is $D_\omega$-classical, then the sequence $\{P^{[k]}_n\}_{n\geqslant0}$ is also $D_\omega$-classical OPS for any
  $k\geqslant1$.
  \end{corollary}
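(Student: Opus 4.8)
The plan is to reduce the statement to the single step $k=1$ and then iterate. Since the recursive definition \eqref{eq4.1a} gives $P^{[k]}_n=\bigl(P^{[k-1]}\bigr)^{[1]}_n$, it suffices to prove the following: if an OPS $\{R_n\}_{n\geqslant0}$ is $D_\omega$-classical, then its derivative sequence $\{R^{[1]}_n\}_{n\geqslant0}$ is again $D_\omega$-classical. The corollary then follows by a straightforward induction on $k$, the base case being the hypothesis on $\{P_n\}_{n\geqslant0}$ itself. I therefore concentrate on $k=1$, writing $\{Q_n\}_{n\geqslant0}=\{P^{[1]}_n\}_{n\geqslant0}$ with canonical form $v_0$.

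First, $\{Q_n\}_{n\geqslant0}$ is already known to be orthogonal (Definition 1.1, equivalently Proposition 1.2(b)), so $v_0$ is regular. It then remains, by the equivalence (a)$\Leftrightarrow$(e) of Proposition 1.2, to produce a monic $\tilde\Phi$ with $\deg\tilde\Phi\leqslant2$ and a $\tilde\Psi$ with $\deg\tilde\Psi=1$ such that $D_{-\omega}(\tilde\Phi v_0)+\tilde\Psi v_0=0$. The first ingredient is the identification $v_0=c\,\Phi u_0$ for some $c\neq0$, where $\Phi,\Psi$ are the data of \eqref{eq1.13} for $u_0$. This comes out of the first structure relation \eqref{eq1.12}: pairing $\Phi u_0$ against $Q_n$ gives $\langle \Phi u_0,Q_n\rangle=\langle u_0,\alpha_{n+2}^2P_{n+2}+\alpha_{n+1}^1P_{n+1}+\alpha_n^0P_n\rangle$, which vanishes for $n\geqslant1$ and equals $\alpha_0^0(u_0)_0\neq0$ for $n=0$; Lemma 1.3 then yields $\Phi u_0\propto v_0$.

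The second, and decisive, ingredient is the transposed Leibniz rule for $D_{-\omega}$ (one of the properties of $D_\omega$ recalled from \cite{AbMa}), which I would state in the form $D_{-\omega}(\phi u)=(\tau_\omega\phi)\,D_{-\omega}u+(\tau_\omega D_\omega\phi)\,u$ and verify once on monomials. Applying it with $u=v_0$ and the \emph{shifted} polynomial $\tilde\Phi:=\tau_{-\omega}\Phi$ (so that $\tau_\omega\tilde\Phi=\Phi$ and $\tau_\omega D_\omega\tilde\Phi=D_\omega\Phi$), and using both $v_0=c\,\Phi u_0$ and $D_{-\omega}v_0=-c\,\Psi u_0$ (the latter from \eqref{eq1.13}), the two terms combine into $c\,\Phi\,(D_\omega\Phi-\Psi)u_0$. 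Since $v_0=c\,\Phi u_0$, this is exactly $-\tilde\Psi v_0$ with $\tilde\Psi:=\Psi-D_\omega\Phi$, giving the desired equation $D_{-\omega}(\tilde\Phi v_0)+\tilde\Psi v_0=0$.

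Finally I would check the degree conditions, which is where I expect the only genuine subtlety to lie. The monic $\tilde\Phi=\tau_{-\omega}\Phi$ has the same degree as $\Phi$, hence $\leqslant2$. For $\tilde\Psi=\Psi-D_\omega\Phi$ one has $\deg D_\omega\Phi=\deg\Phi-1\leqslant1$, so $\deg\tilde\Psi\leqslant1$, and the point is to rule out a drop to degree $0$. When $\deg\Phi\leqslant1$ this is automatic, since $D_\omega\Phi$ is then constant. When $\deg\Phi=2$, writing $\Psi=\psi_1x+\psi_0$, the leading coefficient of $\tilde\Psi$ is $\psi_1-2$; comparing leading terms in \eqref{eq1.11} gives $\lambda_n=(n+1)(\psi_1-n)$, so the regularity requirement $\lambda_n\neq0$ for all $n$ forces $\psi_1\notin\{0,1,2,\ldots\}$, and in particular $\psi_1\neq2$, whence $\deg\tilde\Psi=1$. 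With the regularity of $v_0$ and this functional equation in hand, Proposition 1.2 shows that $\{Q_n\}_{n\geqslant0}$ is $D_\omega$-classical, which closes the base case and, by induction, the corollary. The main obstacle is purely the bookkeeping of the $\tau_{\pm\omega}$ shifts: recognizing that the correct modified polynomial for $v_0$ is $\tau_{-\omega}\Phi$ rather than $\Phi$ itself is what makes the discrete computation close up, and it is the one place where the argument genuinely departs from the classical case $\omega=0$.
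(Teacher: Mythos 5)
Your proof is correct. The paper itself offers no argument for this corollary beyond citing \cite{AbMa} and calling it an immediate consequence of Proposition 1.2, and your proof is precisely the working-out of that route: reduce to $k=1$, identify $v_0\propto\Phi u_0$ via the structure relation \eqref{eq1.12} and Lemma 1.3, transfer the functional equation \eqref{eq1.13} to $v_0$ with $\tilde\Phi=\tau_{-\omega}\Phi$ and $\tilde\Psi=\Psi-D_\omega\Phi$ (including the correct degree check $\deg\tilde\Psi=1$ via $\lambda_n=(n+1)(\psi_1-n)\neq0$), and iterate --- which is the same mechanism used in \cite{AbMa}.
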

By an application of this corollary, if we denote by $\big(\beta^{[k]}_n , \gamma^{[k]}_{n+1}\big)_{n\in\N}$ the recurrence coefficients corresponding 
to the OPS $\{P^{[k]}_n\}_{n\geqslant0}$, with $k\geqslant1$, then
 \begin{subequations}
\begin{align}
&P^{[k]}_{n+2}(x) = (x - \beta^{[k]}_{n+1})P^{[k]}_{n+1}(x)-\gamma^{[k]}_{n+1}P^{[k]}_n (x), \ n\geqslant0,\label{4.2a}\\
&P^{[k]}_1(x) = x - \beta^{[k]}_0,\ P^{[k]}_0(x) = 1.\label{4.2b}
\end{align}
\end{subequations}
 Our objective here is to express the coefficients $\beta^{[k]}_n$ and $\gamma^{[k]}_{n+1}$  in terms of the corresponding coefficients of the OPS
 $\{P_n\}_{n\geqslant0}$, namely, $\beta_n$ and  $\gamma_{n+1}$ obtained either in Case {\bf A} or in Case {\bf B}.\\
This will be stated in the next proposition.
\begin{proposition} Let $\{P_n\}_{n\geqslant0}$ be a $D_\omega$-classical OPS. Then, for every  $k\in \N$, we have
  \begin{align}
\beta^{[k]}_{n}&=\beta_{n+k}+k\delta_{n+k-1},\ \ n\geqslant0,\label{4.3}\\
\gamma^{[k]}_n &= \frac{n}{n+k}\gamma_{n+k}\big(k(\theta_{n+k-1}-1)+1\big), \ \ n\geqslant1,\label{4.4}
 \end{align}
  where  $\delta_n$ and $\theta_n$ are solutions for  the equations \eqref{eq2.7} and \eqref{eq2.8}, respectively.
   \end{proposition}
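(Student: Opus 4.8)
The plan is to proceed by induction on $k$, using the fact (Corollary 4.1) that each higher-order derivative sequence $\{P^{[k]}_n\}_{n\geqslant0}$ is itself $D_\omega$-classical. The base case $k=0$ is the identity $\beta^{[0]}_n=\beta_n$, $\gamma^{[0]}_n=\gamma_n$, which holds by the stated conventions $P^{[0]}_n:=P_n$, while $k=1$ is exactly \eqref{eq2.1}--\eqref{eq2.2}, since $\beta^{[1]}_n=\tilde\beta_n=\beta_{n+1}+\delta_n$ and $\gamma^{[1]}_n=\tilde\gamma_n=\frac{n}{n+1}\gamma_{n+1}\theta_n=\frac{n}{n+1}\gamma_{n+1}\big(1\cdot(\theta_n-1)+1\big)$, matching \eqref{4.3}--\eqref{4.4} at $k=1$.

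For the inductive step, I would exploit the key structural observation that the pair $\{P^{[k]}_n\}$ and its derivative $\{P^{[k+1]}_n\}$ stand in precisely the same relationship as $\{P_n\}$ and $\{Q_n\}=\{P^{[1]}_n\}$ do. Concretely, applying Corollary 4.1, the sequence $\{P^{[k]}_n\}$ is $D_\omega$-classical with recurrence coefficients $\big(\beta^{[k]}_n,\gamma^{[k]}_{n+1}\big)$, and its $D_\omega$-derivative sequence is $\{P^{[k+1]}_n\}$ with coefficients $\big(\beta^{[k+1]}_n,\gamma^{[k+1]}_{n+1}\big)$. The relations \eqref{eq2.1}--\eqref{eq2.2} are derived purely from the Hahn property and the two recurrence relations, so they apply verbatim to any $D_\omega$-classical family. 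Thus there exist auxiliary coefficients $\delta^{[k]}_n$ and $\theta^{[k]}_n$ satisfying
\begin{align*}
\beta^{[k+1]}_n=\beta^{[k]}_{n+1}+\delta^{[k]}_n,\qquad
\gamma^{[k+1]}_n=\frac{n}{n+1}\gamma^{[k]}_{n+1}\theta^{[k]}_n.
\end{align*}
The crux is then to identify $\delta^{[k]}_n$ and $\theta^{[k]}_n$ in terms of the original $\delta_n$ and $\theta_n$. The expected answer is $\delta^{[k]}_n=\delta_{n+k}$ and $\theta^{[k]}_n=\theta_{n+k}$: the auxiliary quantities of the $k$-th derivative family are just those of the original family shifted by $k$. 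This should follow because $\delta_n$ and $\theta_n$ satisfy \eqref{eq2.7}--\eqref{eq2.8}, which are shift-invariant recursions (the solutions in Cases A and B depend on $n$ only through $n+\theta$, and a shift $n\mapsto n+k$ is absorbed by reindexing), and because the same equations govern the derivative family.

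Granting the shift identities $\delta^{[k]}_n=\delta_{n+k}$, $\theta^{[k]}_n=\theta_{n+k}$, the induction closes by substitution. For $\beta$, using the inductive hypothesis \eqref{4.3},
\begin{align*}
\beta^{[k+1]}_n=\beta^{[k]}_{n+1}+\delta^{[k]}_n
=\big(\beta_{n+1+k}+k\delta_{n+k}\big)+\delta_{n+k}
=\beta_{n+(k+1)}+(k+1)\delta_{n+(k+1)-1},
\end{align*}
which is \eqref{4.3} with $k$ replaced by $k+1$. For $\gamma$, using \eqref{4.4} together with the relation $\theta^{[k]}_n=\theta_{n+k}$, one computes
\begin{align*}
\gamma^{[k+1]}_n
=\frac{n}{n+1}\gamma^{[k]}_{n+1}\theta_{n+k}
=\frac{n}{n+1}\cdot\frac{n+1}{n+1+k}\gamma_{n+1+k}\big(k(\theta_{n+k}-1)+1\big)\theta_{n+k},
\end{align*}
and the remaining task is to verify the algebraic identity
\begin{align*}
\big(k(\theta_{n+k}-1)+1\big)\theta_{n+k}=(k+1)(\theta_{n+k}-1)+1,
\end{align*}
which is where the Riccati relation \eqref{eq2.8} enters. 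The main obstacle I anticipate is precisely this last simplification: in Case A it is trivial since $\theta_n\equiv1$, but in Case B one has $\theta_{n+k}=\frac{n+k+\theta+1}{n+k+\theta}$, and one must check that the product telescopes correctly. A direct expansion gives $\big(k(\theta_{n+k}-1)+1\big)\theta_{n+k}=k\theta_{n+k}^2+(1-k)\theta_{n+k}$, and reducing this to $(k+1)\theta_{n+k}-k$ requires the quadratic relation $\theta_{n+k}^2=2\theta_{n+k}-1+(\theta_{n+k+1}-2)\theta_{n+k}+\cdots$; the cleanest route is to substitute the explicit Case B form of $\theta_{n+k}$ and confirm both sides equal $\frac{(k+1)(n+k+\theta+1)-k(n+k+\theta)}{n+k+\theta}$ directly. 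I would therefore organize the proof so that the shift-invariance of $\delta_n,\theta_n$ is established once and cleanly, after which the two substitution computations above complete the induction.
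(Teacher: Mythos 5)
Your base case and overall induction framework are fine, but the inductive step rests on an identification that is false, and the place where it breaks is exactly the ``algebraic identity'' you defer to the end. Writing, as you do, $\beta^{[k+1]}_n=\beta^{[k]}_{n+1}+\delta^{[k]}_n$ and $\gamma^{[k+1]}_n=\frac{n}{n+1}\gamma^{[k]}_{n+1}\theta^{[k]}_n$ is legitimate (each derivative sequence is $D_\omega$-classical, so \eqref{eq2.1}--\eqref{eq2.2} apply at every level), but your claimed shift identity $\theta^{[k]}_n=\theta_{n+k}$ is wrong in Case {\bf B}. Each application of $D_\omega$ shifts the parameter $\theta$ by $2$, not by $1$ (think of Jacobi: the $k$-th derivative of $\hat J_n(\cdot;\alpha,\beta)$ is $\hat J_n(\cdot;\alpha+k,\beta+k)$, whose parameter is $\theta+2k$), so the correct identification is $\theta^{[k]}_n=\theta_{n+2k}=\frac{n+2k+\theta+1}{n+2k+\theta}$. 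Consequently the identity you propose to verify,
\begin{align*}
\bigl(k(\theta_{n+k}-1)+1\bigr)\theta_{n+k}=(k+1)(\theta_{n+k}-1)+1,
\end{align*}
is not an obstacle to be computed through but is identically false: the left side minus the right side equals $k(\theta_{n+k}-1)^2$, which in Case {\bf B} is $k/(n+k+\theta)^2\neq0$ for all $k\geqslant1$. (With $\theta^{[k]}_n=\theta_{n+2k}$ the analogous identity $\bigl(k(\theta_{n+k}-1)+1\bigr)\theta_{n+2k}=(k+1)(\theta_{n+k}-1)+1$ does hold and the $\gamma$-induction closes; your $\delta^{[k]}_n=\delta_{n+k}$ happens to be correct.) There is also a second, logical gap: even the corrected identifications are asserted, not proved, and proving them directly is circular, since pinning down $\theta^{[k]}_n$ requires knowing $\gamma^{[k+1]}_n$ --- precisely what the induction is trying to establish.

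The paper's proof avoids both problems by never introducing the auxiliary coefficients of the derivative families. Instead, it applies $D_\omega$ to the second structure relation \eqref{4.5a} at level $k$ and compares with the structure relation \eqref{4.7a} at level $k+1$; matching coefficients yields the three-term recursions in $k$,
\begin{align*}
\beta^{[k+1]}_{n+1}=2\beta^{[k]}_{n+2}-\beta^{[k-1]}_{n+3},\qquad
\gamma^{[k+1]}_{n+1}=2\tfrac{n+1}{n+2}\gamma^{[k]}_{n+2}-\tfrac{n+1}{n+3}\gamma^{[k-1]}_{n+3},
\end{align*}
i.e.\ \eqref{4.15}--\eqref{4.16}, and then a two-level induction (hypothesis at orders $k$ and $k-1$) closes by pure substitution, the parameter-shift phenomenon being produced automatically rather than postulated. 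If you want to salvage your route, you must first prove $\theta^{[1]}_n=\theta_{n+2}$ and $\delta^{[1]}_n=\delta_{n+1}$ non-circularly (for instance by running the system \eqref{eq2.4}--\eqref{eq2.8} for the pair $\{Q_n\},\{P^{[2]}_n\}$ and determining the integration constants from \eqref{eq1.16}), and only then iterate; as written, the proposal does not prove the proposition.
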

\begin{proof} From Corollary 4.1 it follows that each sequence  $\{P^{[k]}_n\}_{n\geqslant0}$, $k\geqslant1$, is also $D_\omega$-classical.
Accordingly, both of the OPS $\{P^{[k]}_n\}_{n\geqslant0}$ and $\{P^{[k+1]}_n\}_{n\geqslant0}$ are characterized by the fact that they satisfy 
a second structure relation of type \eqref{1.15a}-\eqref{1.15b}. For $\{P^{[k]}_n\}_{n\geqslant0}$, this  is given by
\begin{subequations}\begin{align}
&P^{[k-1]}_{n+2} = P^{[k]}_{n+2} + \alpha^{k,1}_{n+1}P^{[k]}_{n+1} + \alpha^{k,0} _{n}P^{[k]}_{n}, \ n\geqslant0, \label{4.5a}\\
&P^{[k-1]}_1 =  P^{[k]}_1 +\alpha^{k,1}_0 ,\ \  P^{[k-1]}_0 =  P^{[k]}_0 = 1,\label{4.5b}
\end{align}\end{subequations}
where
\begin{align}
\alpha^{k,1}_{n}=(n+1)\left(\beta^{[k-1]}_{n+1}-\beta^{[k]}_{n}-\omega\right) \ \ \mbox{and}\ \ \alpha^{k,0} _{n}=(n+1)\gamma^{[k-1]}_{n+2}-
(n+2)\gamma^{[k]}_{n+1}.\label{4.6}
\end{align}
Likewise, for $\{P^{[k+1]}_n\}_{n\geqslant0}$, an equivalent  relation  may be written as
\begin{subequations}\begin{align}
&P^{[k]}_{n+2} = P^{[k+1]}_{n+2} + \alpha^{k+1,1}_{n+1}P^{[k+1]}_{n+1} + \alpha^{k+1,0} _{n}P^{[k+1]}_{n}, \ n\geqslant0, \label{4.7a}\\
&P^{[k]}_1 =  P^{[k+1]}_1 +\alpha^{k+1,1}_0 ,\ \  P^{[k]}_0 =  P^{[k+1]}_0 = 1,\label{4.7b}
\end{align}\end{subequations}
with
\begin{align}
\alpha^{k+1,1}_{n}=(n+1)\left(\beta^{[k]}_{n+1}-\beta^{[k+1]}_{n}-\omega\right) \ \ \mbox{and}\ \ \alpha^{k+1,0} _{n}=(n+1)\gamma^{[k]}_{n+2}-
(n+2)\gamma^{[k+1]}_{n+1}.\label{4.8}
\end{align}
To prove the equalities \eqref{4.3} and \eqref{4.4}, we can proceed by induction on $k$. For this, let $\mathrm{P}(k)$ be the proposition
that these two equalities  hold. Observe first that the assertion $\mathrm{P}(1)$ is  trivial.
 Let us check that  $\mathrm{P}(2)$ is true. Setting $k=1$ in \eqref{4.5a}-\eqref{4.5b} we get
 \begin{subequations}\begin{align}
&P_{n+2} = P^{[1]}_{n+2} + \alpha^{1,1}_{n+1}P^{[1]}_{n+1} + \alpha^{1,0} _{n}P^{[1]}_{n}, \ n\geqslant0, \label{4.9a}\\
&P_1 =  P^{[1]}_1 +\alpha^{1,1}_0 ,\ \  P_0 =  P^{[1]}_0 = 1,\label{4.9b}
\end{align}\end{subequations}
where $\alpha^{1,1}_{n}:=\tilde{\alpha}^{1}_{n}$ and $\alpha^{1,0}_{n}:=\tilde{\alpha}^{0}_{n}$, since $P^{[1]}_n:=Q_n$ and $P^{[0]}_n:=P_n$.
Roughly  speaking, in this  special case,  the formulas \eqref{4.9a}-\eqref{4.9b} and \eqref{1.15a}-\eqref{1.15b} coincide.  \\
Similarly,  if we take $k=2$ in \eqref{4.5a}-\eqref{4.5b}, we have
 \begin{subequations}\begin{align}
&P^{[1]}_{n+2} = P^{[2]}_{n+2} + \alpha^{2,1}_{n+1}P^{[2]}_{n+1} + \alpha^{2,0} _{n}P^{[2]}_{n}, \ n\geqslant0, \label{4.10a}\\
&P^{[1]}_1 =  P^{[2]}_1 +\alpha^{2,1}_0 ,\ \  P^{[1]}_0 =  P^{[2]}_0 = 1.\label{4.10b}
\end{align}\end{subequations}
 Replace $n$ by $n+1$ in \eqref{4.9a} and then apply the operator $D_\omega$ yields
\begin{align}
(n+3)P^{[1]}_{n+2} = (n+3)P^{[2]}_{n+2} + (n+2)\alpha^{1,1}_{n+2}P^{[2]}_{n+1} + (n+1)\alpha^{1,0} _{n+1}P^{[2]}_{n}, \ n\geqslant0. \label{4.11}
\end{align}
Multiply both sides of \eqref{4.10a} by $(n+3)$ and compare this with \eqref{4.11} readily gives
\begin{subequations}\begin{align}
(n+3)\alpha^{2,1}_{n+1}&=(n+2)\alpha^{1,1}_{n+2},\label{4.12a}\\
(n+3)\alpha^{2,0}_{n}&=(n+1)\alpha^{1,0}_{n+1}. \label{4.12b}
\end{align}\end{subequations}
By  \eqref{4.6}, for $k=2$  and $k=1$, it is easy to check that  \eqref{4.12a} and \eqref{4.12b}, respectively,  lead to
\begin{align*}
(n+3)(n+2)\big[\beta^{[1]}_{n+2}-\beta^{[2]}_{n+1}-\omega\big]&=(n+3)(n+2)\big[\beta_{n+3}-\beta^{[1]}_{n+2}-\omega\big], \\
(n+3)\big[(n+1)\gamma^{[1]}_{n+2}- (n+2)\gamma^{[2]}_{n+1}\big]&=(n+1)\big[(n+2)\gamma_{n+3}- (n+3)\gamma^{[1]}_{n+2}\big].
\end{align*}
 Thanks to  \eqref{eq2.1}-\eqref{eq2.2}, we respectively deduce that
\begin{align*}
\beta^{[2]}_{n}&=\beta_{n+2}+2\delta_{n+1},\ \ n\geqslant0,\\
\gamma^{[2]}_n &= \frac{n}{n+2}\gamma_{n+2}\big(2\theta_{n+1}-1\big), \ \ n\geqslant1,
 \end{align*}
 which is precisely the desired conclusion, that is,  $\mathrm{P}(2)$ is true.\par
 \noindent
Now, we must show that the conditional statement  $\mathrm{P}(k)\rightarrow\mathrm{P}(k+1)$ is true for all positive integers $k$.
We can proceed analogously to the proof of $\mathrm{P}(2)$.
Starting from \eqref{4.5a}, replacing $n$ by $n+1$ and then apply the operator $D_\omega$ we find
\begin{align}
(n+3)P^{[k]}_{n+2} = (n+3)P^{[k+1]}_{n+2} + (n+2)\alpha^{k,1}_{n+2}P^{[k+1]}_{n+1} + (n+1)\alpha^{k,0} _{n+1}P^{[k+1]}_{n}, \ n\geqslant0. \label{4.13}
\end{align}
Multiply both sides of \eqref{4.7a} by $(n+3)$ and compare this with \eqref{4.13} readily gives
\begin{subequations}\begin{align}
(n+3)\alpha^{k+1,1}_{n+1}&=(n+2)\alpha^{k,1}_{n+2},\label{4.14a}\\
(n+3)\alpha^{k+1,0}_{n}&=(n+1)\alpha^{k,0}_{n+1}. \label{4.14b}
\end{align}\end{subequations}
On account of \eqref{4.6}-\eqref{4.8}, we have
\begin{align*}
(n+3)(n+2)\big[\beta^{[k]}_{n+2}-\beta^{[k+1]}_{n+1}-\omega\big]&=(n+3)(n+2)\big[\beta^{[k-1]}_{n+3}-\beta^{[k]}_{n+2}-\omega\big], \\
(n+3)\big[(n+1)\gamma^{[k]}_{n+2}- (n+2)\gamma^{[k+1]}_{n+1}\big]&=(n+1)\big[(n+2)\gamma^{[k-1]}_{n+3}- (n+3)\gamma^{[k]}_{n+2}\big].
\end{align*}
From this, we deduce that
\begin{align}
 \beta^{[k+1]}_{n+1} &=2\beta^{[k]}_{n+2}-\beta^{[k-1]}_{n+3},\label{4.15} \\
\gamma^{[k+1]}_{n+1} &=2\frac{n+1}{n+2}\gamma^{[k]}_{n+2}-\frac{n+1}{n+3}\gamma^{[k-1]}_{n+3}.\label{4.16}
\end{align}
On the other hand, according to the induction hypothesis  we may write
    \begin{align*}
\beta^{[k]}_{n+2}&=\beta_{n+k+2}+k\delta_{n+k+1},
&&\beta^{[k-1]}_{n+3}=\beta_{n+k+2}+(k-1)\delta_{n+k+1},\\
\gamma^{[k]}_{n+2} \!&=  \!\frac{n\!+ \!2}{n\!+ \!k\!+ \!2}\gamma_{n+k+2}\big(k(\theta_{n+k+1} \!- \!1) \!+ \!1\!\big),\
&&\gamma^{[k-1]}_{n+3} \! = \! \frac{n\!+ \!3}{n\!+ \!k\!+ \!2}\gamma_{n+k+2}\big(\!(k\!- \!1)(\theta_{n+k+1}\!- \!1)\!+ \!1\!\big).
  \end{align*}
 Substituting these into  \eqref{4.15}-\eqref{4.16}, and then changing $n$ into $n-1$, it follows  that

   \begin{align*}
\beta^{[k+1]}_{n}&=\beta_{n+k+1}+(k+1)\delta_{n+k},\ \ n\geqslant0,\\
\gamma^{[k+1]}_n &= \frac{n}{n+k+1}\gamma_{n+k+1}\big((k+1)(\theta_{n+k}-1)+1\big), \ \ n\geqslant1.
 \end{align*}
 These last equalities show that $\mathrm{P}(k+1)$ is also true, which completes the proof.
\end{proof}\par
\medskip
\noindent{\bf Remark.}
Due to \eqref{4.3} and \eqref{4.4}, from \eqref{4.6}, it is seen that the two coefficients $\alpha^{k,1}_{n}$ and $\alpha^{k,0}_{n}$ involving 
in the structure relation \eqref{4.5a}-\eqref{4.5b} can be rewritten as
 \begin{align}
\alpha^{k,1}_{n}=-(n+1)\left(\delta_{n+k-1}+\omega\right) \, ;\, \alpha^{k,0} _{n}=\frac{(n+1)(n+2)}{n+k+1}\gamma_{n+k+1}
\left(1-\theta_{n+k}\right),\ n\geqslant0. \label{eq4.17}
\end{align}
For $k=1$, this reduces to  \eqref{eq2.3} providing the coefficients of \eqref{1.15a}-\eqref{1.15b}. \par
\medskip
\noindent{\bf Application.}\par
\smallskip
\noindent When  $\omega=0$, the identities \eqref{4.5a}-\eqref{4.5b} consist of the structure relation characterising the higher order derivatives sequence  of the classical orthogonal polynomials. In this case, a direct application of the preceding  proposition enables us to express  the recurrence coefficients of the sequence $\{P^{[k]}_n\}_{n\geqslant0}$ in terms of the recurrence coefficients for each of the four  classical families.\\
To this purpose, if we denote by $\{\hat{H}_n\}_{n\geqslant0}$, $\{\hat{L}_n(.;\alpha)\}_{n\geqslant0}$, $\{\hat{B}_n(.;\alpha)\}_{n\geqslant0}$ 
and $\{\hat{J}_n(.;\alpha,\beta)\}_{n\geqslant0}$, respectively, the (monic) Hermite, Laguerre, Bessel and Jacobi polynomials, and by $\hat{H}^{[k]}_n(x)$,  $\hat{L}^{[k]}_n(x;\alpha)$, $\hat{B}^{[k]}_n(x;\alpha)$ and $\hat{J}^{[k]}_n(x;\alpha,\beta)$ their corresponding sequences of derivatives of order
  $k$, then application of Formulas \eqref{4.3} and \eqref{4.4} successively give:\par
 \medskip
\noindent   {\bf Case A.}  For $\ \theta_n=1,\, n\geqslant1$, and $\delta_n=\delta_0,\, n\geqslant1$,  we have 
    \begin{align}
\hat{\beta}^{[k]}_{n}&=\hat{\beta}_{n+k}+k\delta_{0},\ \ n\geqslant0,\label{eq4.18}\\
\hat{\gamma}^{[k]}_n &= \frac{n}{n+k}\hat{\gamma}_{n+k}, \ \ n\geqslant1.\label{eq4.19}
 \end{align}
{\it Hermite case} : When $\delta_0=0$, $\hat{\gamma}_1=\frac{1}{2}$, this yields
\begin{align*}
   \hat{\beta}^{[k]}_n=0, n\geqslant0, \ \ \mbox{and}\ \ \hat{\gamma}^{[k]}_{n+1}=\frac{1}{2}(n+1), n\geqslant0.
 \end{align*}
 \smallskip
 \noindent{\it Laguerre case} :  When $\delta_0=-1$, $\hat{\gamma}_1=\hat{\beta}_0=\alpha+1$, we get
 \begin{align*}
 \hat{\beta}^{[k]}_n=2n+\alpha+k+1, n\geqslant0, \ \ \mbox{and}\ \ \hat{\gamma}^{[k]}_{n+1}=(n+1)(n+\alpha+k+1), n\geqslant0.
  \end{align*}
    \medskip
\noindent  {\bf Case B.} For  $ \theta_n=\frac{n+\theta+1}{n+\theta},\, n\geqslant1, \  \mbox{and}\
  \delta_n=\frac{\delta_0(\theta+3)(\theta+1)}{\big(2n+\theta+3\big)\big(2n+\theta+1\big)},\, n\geqslant0$, we deduce that
  \begin{align}
\hat{\beta}^{[k]}_{n}&=\hat{\beta}_{n+k}+\frac{k\delta_0(\theta+3)(\theta+1) }{\big(2(n+k)+\theta+1\big)\big(2(n+k)+\theta-1\big)} ,\ \ n\geqslant0,\label{eq4.19}\\
\hat{\gamma}^{[k]}_n &= \frac{n(n+\theta+2k-1)}{(n+k)(n+\theta+k-1)}\hat{\gamma}_{n+k} , \ \ n\geqslant1.\label{eq4.30}
 \end{align}
 \noindent{\it Bessel case} : When $\theta=2\alpha-1$, $\delta_0=-1/(\alpha+1)\alpha$, we obtain
  \begin{align*} \hat{\beta}^{[k]}_{n}&=\frac{1-(\alpha+k)}{(n+\alpha+k)(n+\alpha+k-1)},\  n\geqslant0, \\
\hat{\gamma}^{[k]}_{n+1}&=-\frac{(n+1)(n+2(\alpha+k)-1)}{(2n+2(\alpha+k)+1)(n+\alpha+k)^2(2n+2(\alpha+k)-1)},\ n\geqslant0.
 \end{align*}
    \noindent{\it Jacobi case} : When $\theta=\alpha+\beta+1$, $\delta_0=2(\alpha-\beta)/(\alpha+\beta+4)(\alpha+\beta+2)$, we get
  \begin{align*}
  \hat{\beta}^{[k]}_{n}&=\frac{(\alpha-\beta)(\alpha+\beta+2k)}{(2n+\alpha+\beta+2k+2)(2n+\alpha+\beta+2k)},\ n\geqslant 0,\\
\hat{\gamma}^{[k]}_{n+1}&=4\frac{(n+1)(n+\alpha+\beta+2k+1)(n+\alpha+k+1)(n+\beta+k+1)}
{(2n+\alpha+\beta+2k+3)(2n+\alpha+\beta+2k+2)^2(2n+\alpha+\beta+2k+1)},\ n\geqslant0.
\end{align*}
We thus rediscover  the well known relations $ \hat{H}^{[k]}_n(x)=\hat{H}_n(x)$, $\hat{L}^{[k]}_n(x;\alpha)=\hat{L}_n(x;\alpha+k)$,
 $\hat{B}^{[k]}_n(x;\alpha)=\hat{B}_n(x;\alpha+k)$ and  $\hat{J}^{[k]}_n(x;\alpha,\beta)=\hat{J}_n(x;\alpha+k,\beta+k)$.\\
 The results presented above are of course known and clearly show that the sequences of higher order derivative for the classical
 orthogonal polynomials belonging to the same class, provided that the parameters $\alpha$ and $\beta$ take values in the range of regularity.\par
 \bigskip
\noindent {\bf Conclusion.}\\
 We studied the $D_\omega$-classical orthogonal polynomials using a new method in this domain. The results obtained in Section 3 are  expected and 
 are consistent with those found in \cite{AbMa}, where four different families are pointed out with some of their special cases. The recurrence coefficients of the resulting orthogonal polynomials are explicitly determined. Proposition 1.4 established a new characterization of these polynomials via a structure relation, and   Proposition 4.2  provided  relations  connecting the  recurrence coefficients of each sequence of polynomials with those of its higher-order derivatives. For $\omega=0$, the classical orthogonal polynomials are rediscovered.

\end{document}